\newtheorem{theorem}{Theorem}
\theoremstyle{plain}
\newtheorem{acknowledgement}{Acknowledgement}
\newtheorem{corollary}{Corollary}
\newtheorem{lemma}{Lemma}
\newtheorem{proposition}{Proposition}
\newtheorem{remark}{Remark}
\numberwithin{equation}{section}
\begin{document}
\title[Nonexistence of Levi flat hypersurfaces]{Nonexistence of Levi flat
hypersurfaces with positive normal bundle in compact K\"{a}hler manifolds of
dimension $\geqslant 3$}
\author{S\'{e}verine Biard}
\address{Institut de Math\'{e}matiques \\
UMR 7586 du CNRS, case 247 \\
Universit\'{e} Pierre et Marie-Curie \\
4 Place Jussieu \\
75252 Paris Cedex 05\\
France\\
Current address: LAMAV, \\
Universit\'{e} Polytechnique Hauts-de-France, \\
Campus du Mont Houy, \\
59313 Valenciennes Cedex 9\\
France}
\email{severine.biard@uphf.fr}
\author{Andrei Iordan}
\address{Sorbonne Universit\'{e}\\
Facult\'{e} des Sciences et Ing\'{e}nierie\\
Institut de Math\'{e}matiques de Jussieu-Paris Rive Gauche \\
4 Place Jussieu \\
75252 Paris Cedex 05\\
France}
\email{andrei.iordan@imj-prg.fr}
\date{September, 30, 2019}
\subjclass{32V40, 32F32, 32Q15, 32W05}
\keywords{Levi flat hypersurface, weighted $\overline{\partial }$-equation}
\dedicatory{In memory of Gennadi M.\ Henkin}

\begin{abstract}
Let $X$ be a compact connected K\"{a}hler manifold of dimension $\geqslant 3$
and $L$ a $C^{\infty }$ Levi flat hypersurface in $X$. Then the normal
bundle to the Levi foliation does not admit a Hermitian metric with positive
curvature along the leaves. This represents an answer to a conjecture of
Marco Brunella.
\end{abstract}

\maketitle

\section{\protect\bigskip Introduction}

A classical theorem of Poincar\'{e}-Bendixson \cite{Poincare1881}, \cite%
{Poincare1882}, \cite{Bendixon1901} states that every leaf of a foliation of
the real projective plane accumulates on a compact leaf or on a singularity
of the foliation. As a holomorphic foliation $\mathcal{F}$ of codimension $1$
of $\mathbb{CP}_{n}$, $n\geqslant 2$, does not contain any compact leaf and
its singular set $Sing~\mathcal{F}$ is not empty, a major problem in
foliation theory is the following: can $\mathcal{F}$ contain a leaf $F$ such
that $\overline{F}\cap Sing~\mathcal{F}=\emptyset $? If this is the case,
then there exists a nonempty compact set $K$ called exceptional minimal,
invariant by $\mathcal{F}$ and minimal for the inclusion such that $K\cap
Sing~\mathcal{F}=\emptyset $. The problem of the existence of an exceptional
minimal in $\mathbb{CP}_{n}$, $n\geqslant 2$ is implicit in \cite{Camacho88}.

In \cite{Cerveau93} D. Cerveau proved a dichotomy under the hypothesis of
the existence of a holomorphic foliation $\mathcal{F}$ of codimension $1\ $%
of $\mathbb{CP}_{n}$ which admits an exceptional minimal $\mathfrak{M}$: $%
\mathfrak{M}$ is a real analytic Levi flat hypersurface in $\mathbb{CP}_{n}$
(i. e. $T\left( \mathfrak{M}\right) \cap JT\left( \mathfrak{M}\right) $ is
integrable, where $J$ is the complex structure of $\mathbb{CP}_{n}$), or
there exists $p\in \mathfrak{M}$ such that the leaf through $p$ has a
hyperbolic holonomy and the range of the holonomy morphism is a linearisable
abelian group. This gave rise to the conjecture of the nonexistence of
smooth Levi flat hypersurface in $\mathbb{CP}_{n}$, $n\geqslant 2$.

The conjecture was proved for $n\geqslant 3$ by A.\ Lins Neto \cite{LinsNeto}
\ for real analytic Levi flat hypersurfaces and by Y.-T.\ Siu \cite{Siu00}
for $C^{12}$ smooth Levi flat hypersurfaces. The methods of proofs for the
real analytic case are very different from the smooth case.

A real hypersurface of class $C^{2}$ in a complex manifold is Levi flat if
its Levi form vanishes or equivalently, it admits a foliation by complex
hypersurfaces. We say that a (non-necessarly smooth) real hypersurface $L$
in a complex manifold $X$ is Levi flat if $X\backslash L$ is pseudoconvex.
An example of (non-smooth) Levi flat hypersurface in $\mathbb{CP}_{2}$ is $%
L=\left\{ \left[ z_{0},z_{1},z_{2}\right] :\ \left\vert z_{1}\right\vert
=\left\vert z_{2}\right\vert \right\} $, where $\left[ z_{0},z_{1},z_{2}%
\right] $ are homogeneous coordinates in $\mathbb{CP}_{2}$ (see \cite%
{Henkin00}).

In \cite{Iordan08} Iordan and Matthey proved the nonexistence of Lipschitz
Levi flat hypersurfaces in $\mathbb{CP}_{n}$, $n\geqslant 3$, which are of
Sobolev class $W^{s}$, $s>9/2$. A principal element of the proof is that the
Fubini-Study metric induces a metric of positive curvature on any quotient
of the tangent space.

Nonexistence questions for the Levi flat hypersurfaces in compact K\"{a}hler
manifolds were first discussed by T. Ohsawa in \cite{Ohsawa07}, who proved
the nonexistence of real-analytic Levi flat hypersufaces with Stein
complement in compact K\"{a}hler manifolds of dimension $\geqslant 3$.

In \cite{Brunella08}, M. Brunella proved that the normal bundle to the Levi
foliation of a closed real analytic Levi flat hypersurface in a compact K%
\"{a}hler manifold of dimension $n\geqslant 3$ does not admit any Hermitian
metric with leafwise positive curvature. The real analytic hypothesis may be
relaxed to the assumption of $C^{2,\alpha }$, $0<\alpha <1$, such that the
Levi foliation extends to a holomorphic foliation in a neighborhood of the
hypersurface.

The main step in his proof is to show that the existence of a Hermitian
metric with leafwise positive curvature on the normal bundle to the Levi
foliation of a compact Levi flat hypersurface $L$ in a Hermitian manifold $X$%
, implies that $X\backslash L$ is strongly pseudoconvex, i.e. there exists
on $X\backslash L$ an exhaustion function which is strongly plurisubharmonic
outside a compact set. This was generalized in \cite{Brunella2010} for
invariant compact subsets of a holomorphic foliation of codimension one. Of
course, if $X$ is the complex projective space, then every proper
pseudoconvex domain in $X$ is Stein \cite{Takeushi67}.

Brunella stated also the following conjecture \cite{Brunella08}: Let $X$ be
a compact connected K\"{a}hler manifold of dimension $n\geqslant 3$ and $L$
a $C^{\infty }$ compact Levi flat hypersurface in $X$. Then the normal
bundle to the Levi foliation does not admit any Hermitian metric with
leafwise positive curvature.

The assumption $n\geqslant 3$ is necessary in this conjecture (see Example
4.2 of \cite{Brunella08}).

In \cite{Brunella2011} Brunella and Perrone proved that every leaf of a
holomorphic foliation $\mathcal{F}$ of codimension one of a projective
manifold $X$ of dimension at least $3$ and such that $Pic\left( X\right) =%
\mathbb{Z}$ accumulates on the singular set of the foliation. In this case
the normal bundle to the foliation is ample.

In \cite{Ohsawa2013}, T. Ohsawa considered a $C^{\infty }$ Levi flat compact
hypersurface $L$ in a compact K\"{a}hler manifold $X$ such that the normal
bundle to the Levi foliation admits a fiber metric whose curvature is
semipositive of rank$\geqslant k$ on the holomorphic tangent space to the
leaves and proved that $X\backslash L$ admits an exhaustion plurisubharmonic
function of logarithmic growth which is strictly $\left( n-k\right) $%
-convex. Then, if $\dim X\geqslant 3$, he proved that there are no Levi flat
real analytic hypersurfaces such that the normal bundle to the Levi
foliation admits a fiber metric whose curvature is semipositive of rank$%
\geqslant 2$ on the holomorphic tangent space to $L$.$\ $Some possibilities
for generalization in the smooth case are also indicated.

In this paper we solve the above mentioned conjecture of Brunella for
compact connected K\"{a}hler manifolds of dimension $n\geqslant 3$. The
principal ingredient of the proof is a refinement of the proof of Brunella 
\cite{Brunella08} of the strong pseudoconvexity of $X\backslash L$ : we show
that there exist a neighborhood $U$ of $L$ and a function $v$ on $U$
vanishing on $L$, such that $-i\partial \overline{\partial }\ln v\geqslant
c\omega $ on $U\backslash L$, where $c>0$ and $\omega $ is the $\left(
1,1\right) $-form associated to the K\"{a}hler metric. Then we use the $L^{2}
$ estimates \cite{Andreotti61}, \cite{AV65}, \cite{HO65}, \cite{DE82} for
the weighted $\overline{\partial }$-equation on $\left( n,q\right) $-forms
on $X\backslash L$ endowed with a complete K\"{a}hler metric. These
estimates together with the lower uniform boundedness of the eigenvalues of
the Levi form and a duality method developped in \cite{Henkin00}, allow us
to solve the $\overline{\partial }$-equation with compact support for $%
\left( 0,q\right) $-forms, $1\leqslant q\leqslant n-1$, and this leads in
dimensions $\geqslant 3$ to the solution of Brunella's conjecture.

\section{Preliminaries}

Let $X$ be a complex $n$-dimensional manifold, $\omega $ a K\"{a}hler metric
on $X$, $\Omega $ a domain in $X$ and $\sigma $ a positive function on $%
\Omega $. For $\alpha \in \mathbb{R}$ denote 
\begin{equation*}
L_{(p,q)}^{2}(\Omega ,\sigma ^{\alpha },\omega )=\left\{ f\in L_{\left(
p,q\right) loc}^{2}\left( \Omega \right) :\int_{\Omega }\left\vert
f\right\vert ^{2}\sigma ^{2\alpha }dV_{\omega }<\infty \right\}
\end{equation*}%
endowed with the norm%
\begin{equation*}
N_{\alpha ,\omega ,\sigma }(f)=\left( \int_{\Omega }\left\vert f\right\vert
^{2}\sigma ^{2\alpha }dV_{\omega }\right) ^{1/2}.
\end{equation*}

Let $\Omega $ be a pseudoconvex domain in $\mathbb{CP}_{n}$ and $\delta
_{\partial \Omega }$ the geodesic distance to the boundary for the
Fubini-Study metric $\omega _{FS}$. By using the $L^{2}$ estimates for the $%
\overline{\partial }$-operator of H\"{o}rmander with the weight $e^{-\varphi
}$, $\varphi =-\alpha \log \delta _{\partial \Omega }$ which is strongly
plurisubharmonic by a theorem of Takeuchi \cite{Takeushi67}, Henkin and
Iordan proved in \cite{Henkin00} the existence and regularity of the $%
\overline{\partial }$ equation for $\overline{\partial }$-closed forms in $%
L_{(p,q)}^{2}(\Omega ,\delta _{\partial \Omega }^{-\alpha },\omega _{FS})$
verifying the moment condition. This gives the regularity of the $\overline{%
\partial }$-operator in pseudoconcave domains with Lipschitz boundary \cite%
{Henkin00} and, by using a method of Siu \cite{Siu00}, \cite{Siu02}, the
nonexistence of smooth Levi flat hypersurfaces in $\mathbb{CP}_{n}$, $%
n\geqslant 3$ follows (see \cite{Iordan08}). These techniques will be used
in the 4th and the 5th paragraph.

We will use also the following theorem of regularity of $\overline{\partial }
$ equation of Brinkschulte \cite{Brinkschulte2004}:

\begin{theorem}
\label{Brinkschulte} Let $\Omega $ be a relatively compact domain with
Lipschitz boundary in a K\"{a}hler manifold $\left( X,\omega \right) $ and
set $\delta _{\partial \Omega }$ the geodesic distance to the boundary of $%
\Omega $. Let $f\in L_{(p,q)}^{2}(\Omega ,\delta _{\partial \Omega
}^{-k},\omega )\cap C_{(p,q)}^{k}\left( \overline{\Omega }\right) \cap
C_{(p,q)}^{\infty }\left( \Omega \right) $, $q\geqslant 1$, $k\in \mathbb{N}$
and $u\in L_{(p,q-1)}^{2}(\Omega ,\delta _{\partial \Omega }^{-k},\omega )$
such that $\overline{\partial }u=f$ and $\overline{\partial }_{-k}^{\ast
}u=0 $, where $\overline{\partial }_{-k}^{\ast }$ is the Hilbert space
adjoint of the unbounded operator $\overline{\partial }%
_{-k}:L_{(p,q-1)}^{2}(\Omega ,\delta _{\partial \Omega }^{-k},\omega
)\rightarrow L_{(p,q)}^{2}(\Omega ,\delta _{\partial \Omega }^{-k},\omega )$%
. Then for $k$ big enough $u\in C_{(p,q-1)}^{s\left( k\right) }\left( 
\overline{\Omega }\right) $ where $s\left( k\right) \underset{k\rightarrow
\infty }{\thicksim }\sqrt{k}$.
\end{theorem}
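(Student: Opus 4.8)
The plan is to separate interior regularity, which is soft, from boundary regularity, which is the real content, and to obtain the latter by iterating H\"{o}rmander's weighted $L^{2}$ estimate with a weight exponent that is allowed to decrease at each step. First, the two equations $\overline{\partial }u=f$ and $\overline{\partial }_{-k}^{\ast }u=0$ together say that $u$ solves the single elliptic equation $\square _{-k}u=\overline{\partial }_{-k}^{\ast }f$ in $\Omega $, where $\square _{-k}=\overline{\partial }\,\overline{\partial }_{-k}^{\ast }+\overline{\partial }_{-k}^{\ast }\overline{\partial }$ has the same (scalar, diagonal) principal part as the complex Laplacian of the K\"{a}hler metric. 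Since $\partial \Omega $ is only Lipschitz, $\delta _{\partial \Omega }$ is merely Lipschitz near $\partial \Omega $; I would replace it throughout by a regularized distance $\rho \in C^{\infty }\left( \Omega \right) $ with $\rho \simeq \delta _{\partial \Omega }$ and $\left\vert \nabla ^{j}\rho \right\vert \lesssim \rho ^{1-j}$, which exists for Lipschitz domains, so that the coefficients of $\square _{-k}$ are smooth in $\Omega $. As $f\in C_{(p,q)}^{\infty }\left( \Omega \right) $, interior elliptic regularity then gives $u\in C_{(p,q-1)}^{\infty }\left( \Omega \right) $, and everything reduces to estimating $u$ and its derivatives up to $\partial \Omega $.

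For the boundary estimates I would start from the a priori inequality furnished by the Morrey--Kohn--H\"{o}rmander identity for the pair $\left( \overline{\partial },\overline{\partial }_{-k}^{\ast }\right) $ in the metric $\rho ^{-k}dV$: it bounds $N_{-k}\left( u\right) ^{2}$ together with the first--order energy in terms of the $\rho ^{-k}$--norms of $\overline{\partial }u$ and $\overline{\partial }_{-k}^{\ast }u$ and a curvature term whose dominant piece is $k\,i\partial \overline{\partial }\left( -\log \rho \right) $ contracted with $u$. Since $-\log \rho $ need not be plurisubharmonic on a general $\Omega $, one absorbs the indefinite part by the Donnelly--Fefferman/Berndtsson device, at the cost of a fixed fraction of the weight, ending with a usable estimate whose effective positivity has size $\sim k\rho ^{-2}$. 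One then differentiates the system. Because $\partial \Omega $ is only Lipschitz there is no global tangential calculus, so I would instead fix a Whitney decomposition of a collar of $\partial \Omega $ into cubes $Q_{\nu }$ with $\operatorname{diam}Q_{\nu }\simeq \operatorname{dist}\left( Q_{\nu },\partial \Omega \right) =:d_{\nu }$, rescale each $2Q_{\nu }$ to unit size, apply interior elliptic estimates of order $m$ there, and translate back. The hypothesis $N_{-k}\left( u\right) <\infty $ gives $\int_{2Q_{\nu }}\left\vert u\right\vert ^{2}dV\lesssim d_{\nu }^{k}\varepsilon _{\nu }$ with $\sum_{\nu }\varepsilon _{\nu }<\infty $; combined with the rescaled estimates, whose constants grow with the order of differentiation since each derivative may land on $\rho ^{-k}$ and produce a factor $\sim k\rho ^{-1}\simeq kd_{\nu }^{-1}$, this yields $\sup_{Q_{\nu }}\left\vert \nabla ^{m}u\right\vert \lesssim C_{m}\!\left( k\right) d_{\nu }^{k/2-m-n}\varepsilon _{\nu }^{1/2}+\left( \text{terms in }f\right) $, and summing over $\nu $ gives $u\in C^{m-n}\left( \overline{\Omega }\right) $ once $m$ is small enough that the positive power of $d_{\nu }$ wins against $C_{m}\!\left( k\right) $.

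The sharp exponent is what needs care. Gaining derivatives efficiently forces a loss in the weight exponent that grows with the order of differentiation --- at each step the new commutators with the singular weight must be reabsorbed by the available $k\rho ^{-2}$--positivity, which costs a further slice of the exponent --- and the cumulative cost after gaining $m$ derivatives is of order $m^{2}$; the iteration therefore survives precisely while $m^{2}\lesssim k$, i.e. up to $m\sim \sqrt{k}$, and Sobolev embedding then yields $u\in C_{(p,q-1)}^{s\left( k\right) }\left( \overline{\Omega }\right) $ with $s\left( k\right) \underset{k\rightarrow \infty }{\thicksim }\sqrt{k}$. I expect the main obstacle to be precisely this boundary analysis: making the rescaling and the differentiated a priori estimates uniform over the Whitney cubes on a merely Lipschitz boundary, and keeping track of the dependence on $k$ of every constant and of the weight budget so as to reach the optimal growth $\sqrt{k}$ rather than something weaker. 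The interior regularity, the construction of the regularized distance, and the Donnelly--Fefferman absorption are, by comparison, routine.
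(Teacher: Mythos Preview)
The paper does not prove this theorem at all: it is quoted verbatim from \cite{Brinkschulte2004} and used as a black box (``We will use also the following theorem of regularity of $\overline{\partial}$ equation of Brinkschulte''), so there is no proof in the paper to compare your proposal against. Your sketch is therefore neither a match nor a mismatch with the paper; whether it reproduces Brinkschulte's original argument is a separate question that this paper cannot answer.
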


\section{Strong pseudoconvexity of the complement of a Levi flat hypersurface%
}

Let $L$ be a smooth Levi flat hypersurface in a Hermitian manifold $X$. As
was mentioned in \cite{Brunella08} and in \cite{Ohsawa2013}, by taking a
double covering, we can assume that $L$ is orientable and the complement of $%
L$ has two connected components in a neighborhood of $L$. This will be
always supposed in the sequel and for an open neighborhood $U$ of $L$ we
will denote by $U^{+}$ and $U^{-}$ the two connected components of $%
U\backslash L$. We will denote by $\delta _{L}$ the signed geodesic distance
to $L$.

In \cite{Brunella08} Brunella proved that the complement of a closed Levi
flat hypersurface in a compact Hermitian manifold of class $C^{2,\alpha }$, $%
0<\alpha <1$, having the property that the Levi foliation extends to a
holomorphic foliation in a neighborhood of $L$ and the normal bundle to the
Levi foliation admits a $C^{2}$ Hermitian metric with leafwise positive
curvature is strongly pseudoconvex, i.e. there exists an exhaustion function
which is strongly plurisubharmonic outside a compact set. The following
proposition strenghtens this result:

\begin{proposition}
\label{Strongly pseudoconvex}Let $L$ be a compact $C^{3}$ Levi flat
hypersurface in a Hermitian manifold $X$ of dimension $n\geqslant 2$, such
that the normal bundle $\mathcal{N}_{L}^{1,0}$ to the Levi foliation admits
a $C^{2}$ Hermitian metric with leafwise positive curvature. Then there
exist a neighborhood $U$ of $L$, $c>0$ and a non-negative function $v\in
C^{2}\left( U\right) $, vanishing on $L$ and positive on $U\backslash L$
such that $-i\partial \overline{\partial }\ln v\geqslant c\omega $ on $%
U\backslash L$, where $\omega $ is the $\left( 1,1\right) $-form associated
to the metric. Moreover, there exists a nonvanishing continuous function $g$
in a neighborhood of $L$ such that $v=g\delta _{L}^{2}$.
\end{proposition}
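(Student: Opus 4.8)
The plan is to work locally in foliation charts and to build the function $v$ out of a leafwise positive metric on the normal bundle, then patch. Near a point of $L$ choose a foliation box $V\cong \Delta^{n-1}\times (-1,1)$ in which the leaves of the Levi foliation are the slices $\{t=\text{const}\}$, and use the defining function which in these coordinates is essentially the transverse coordinate $t$. The normal bundle $\mathcal{N}^{1,0}_{L}$ is, in such a box, trivialized by $\partial/\partial t$ and its Hermitian metric is given by a positive function $h$; the leafwise positivity of the curvature means $-i\partial_b\overline{\partial}_b \log h > 0$ along each leaf, where $\partial_b,\overline{\partial}_b$ denote the leafwise operators. The first step is to extend $h$ off $L$ (using the $C^{3}$ regularity of $L$ and $C^{2}$ regularity of the metric) so that $-i\partial\overline{\partial}\log h \geqslant -C\omega$ on a neighbourhood, i.e. it is bounded below, and to set, as a first approximation to $v$, the quantity $\delta^{2}/h$ where $\delta$ is the geodesic distance to $L$ (or a smooth comparable substitute). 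The point of dividing by $h$ is exactly Brunella's observation: $\delta$ behaves like the modulus of a leafwise-holomorphic section of $\mathcal{N}^{*}_{L}$, so $\log(\delta^{2}/h)$ is, to leading order along the transverse direction, plurisubharmonic with a definite amount of leafwise strict plurisubharmonicity coming from $-\log h$.

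Next I would compute $-i\partial\overline{\partial}\log(\delta^{2}/h)$ and separate the contributions: the $-i\partial\overline{\partial}(-\log h)$ term gives $\geqslant c_0\omega$ along the leaves by hypothesis (after shrinking), while $-i\partial\overline{\partial}\log\delta^{2}$ is the delicate term. Using Takeuchi-type estimates for the distance function to a Levi flat hypersurface (the analogue in a Hermitian manifold of Takeuchi's theorem for pseudoconvex domains in $\mathbb{CP}_n$), $-i\partial\overline{\partial}\log\delta$ is bounded below by something like $-C\omega$ on the complement, and more importantly is $\geqslant 0$ in the leafwise directions up to lower order; the genuinely negative part is concentrated in the mixed and transverse directions and is $O(1)$, not blowing up. The standard device to kill these bad $O(1)$ terms is to add a large multiple of a global strictly plurisubharmonic-near-$L$ correction: take $v = e^{A\rho}\,\delta^{2}/h$ where $\rho$ is a smooth function with $-i\partial\overline{\partial}\rho \geqslant \varepsilon\omega$ near $L$ (such a $\rho$ exists locally because $X$ is Hermitian — one uses $-|z|^2$-type bumps — and then globalizes with a partition of unity at the cost of a controllable error, or one simply observes that since the bad terms are bounded one may instead perturb by $A(\delta^2/h)$ itself after a first reduction). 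Choosing $A$ large enough absorbs the bounded negative contributions and yields $-i\partial\overline{\partial}\log v \geqslant c\,\omega$ on $U\setminus L$ for some $c>0$. Finally, writing $v=g\delta^{2}$ with $g = e^{A\rho}/h$ exhibits $g$ as a nonvanishing continuous (indeed $C^2$) function in a neighbourhood of $L$, and $v$ itself is $C^{2}$ there because $\delta^{2}$ is $C^{2}$ (the square of the geodesic distance to a $C^{3}$ hypersurface is $C^{2}$) and $g$ is $C^{2}$; $v$ vanishes exactly on $L$ since $\delta$ does and $g$ is nonvanishing.

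The main obstacle, and the place where Brunella's argument has to be genuinely refined, is getting the \emph{uniform} lower bound $\geqslant c\omega$ in \emph{all} directions rather than merely leafwise positivity plus boundedness: Brunella only needs strict plurisubharmonicity outside a compact set to get strong pseudoconvexity, whereas here we need the full strict positivity of $-i\partial\overline{\partial}\log v$ on a whole punctured neighbourhood. This forces careful bookkeeping of the transverse second derivatives of $\delta$ — one must use that $L$ is Levi flat (so the second fundamental form / the Levi form vanishes) to ensure the transverse Hessian of $\log\delta^2$ does not produce an uncancellable negative term of the wrong order — together with the freedom to shrink $U$ and to choose the multiplier $A$ after seeing the precise constants. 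A secondary technical point is the regularity claim $v\in C^{2}$: one must check that the leafwise-positive metric $h$ on $\mathcal{N}^{1,0}_L$, a priori only defined on $L$, can be extended to a $C^{2}$ function on a neighbourhood with the curvature inequality surviving, which uses the $C^{3}$ hypothesis on $L$ and a Whitney-type extension together with continuity of the curvature form.
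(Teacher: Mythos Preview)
Your proposal has the right overall shape --- build $v$ as a squared defining function times the metric weight, then estimate $-i\partial\overline{\partial}\log v$ --- but there are two genuine gaps that prevent it from going through.

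First, and most importantly, you work with the geodesic distance $\delta$ and assert that the ``bad'' part of $-i\partial\overline{\partial}\log\delta^{2}$ is $O(1)$. This is not correct in general: the term $-2\,i\partial\overline{\partial}\delta/\delta$ can blow up like $1/\delta$, since $i\partial\overline{\partial}\delta$ need not vanish on $L$ outside the leafwise directions (Levi-flatness only kills the tangential complex Hessian, not the mixed or transverse parts). The paper's essential device, which you are missing, is to replace $\delta$ by $\mathrm{Im}\,\widetilde{f}_{j}$ where $\widetilde{f}_{j}$ is an \emph{almost holomorphic} extension of a leafwise-holomorphic transverse coordinate: one arranges $\overline{\partial}\widetilde{f}_{j}$ to vanish to order $\geqslant 2$ on $L$, which forces $i\partial\overline{\partial}\,\mathrm{Im}\,\widetilde{f}_{j}=O(\mathrm{Im}\,\widetilde{f}_{j})$ and hence makes that quotient bounded. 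The local functions $h_{j}|\eta_{j}|^{2}(\mathrm{Im}\,\widetilde{f}_{j})^{2}$ then agree to second order on overlaps (via the cocycle relation for $h$), and Whitney extension produces the global $C^{2}$ function $v$.

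Second, your remedy for the transverse and mixed directions --- multiplying by $e^{A\rho}$ with $\rho$ strictly plurisubharmonic near $L$ --- does not work: on a general Hermitian manifold there is no reason such a $\rho$ exists on a full neighbourhood of a compact hypersurface, and gluing local $-|z|^{2}$ bumps with a partition of unity destroys plurisubharmonicity. (Your alternative, perturbing $v$ by a multiple of $\delta^{2}/h$ itself, changes $\log v$ only by a constant and hence changes nothing.) More to the point, no correction is needed. The actual source of transverse positivity in the paper is the term
\[
2i\,\frac{\partial\,\mathrm{Im}\,\widetilde{f}_{j}\wedge\overline{\partial}\,\mathrm{Im}\,\widetilde{f}_{j}}{(\mathrm{Im}\,\widetilde{f}_{j})^{2}},
\]
which is nonnegative everywhere and blows up like $1/\delta^{2}$ in the transverse direction $E''$; after shrinking $U$ this dominates every bounded negative contribution (the $1/\varepsilon$ from Cauchy--Schwarz on the cross terms, the bound on the curvature form, etc.). You nowhere invoke this blow-up mechanism, and it is precisely the refinement over Brunella's original argument that the proposition is meant to capture.
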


\begin{proof}
Let $z_{0}\in L$. There exist holomorphic coordinates $z=\left( z_{1},\cdot
\cdot \cdot ,z_{n-1},z_{n}\right) =\left( z^{\prime },z_{n}\right) $ in a
neighborhood of $z_{0}$ such that the local parametric equations for $L$ are
of the form%
\begin{equation*}
z_{j}=w_{j},\ j=1,...,n-1,\ z_{n}=\varphi \left( w^{\prime },t\right)
\end{equation*}%
where $\varphi $ is of class $C^{3}$ (see \cite{Barrett88}) on a
neighborhood of the origin in $\mathbb{C}^{n-1}\times \mathbb{R}$,
holomorphic in $w^{\prime }$ and $\frac{\partial \varphi }{\partial t}\left(
z_{0}\right) \in \mathbb{R}^{\ast }$. We consider a $C^{3}$ extension $\psi
=\left( \psi _{1},...,\psi _{n}\right) $ of $\varphi $ on a neighborhood of
the origin in $\mathbb{C}^{n-1}\times \mathbb{C}$, $\psi \left( w^{\prime
},t+is\right) =\left( w^{\prime },\varphi \left( w^{\prime },t\right)
+is\right) $. Then $\psi $ is a $C^{3}$ diffeomorphism in a neighborhood of $%
z_{0}$ and holomorphic in $w^{\prime }$. It follows that 
\begin{equation*}
L=\left\{ \left( z^{\prime },z_{n}\right) :\rho \left( z^{\prime
},z_{n}\right) =0\right\} .
\end{equation*}%
where$\ \rho =\func{Im}\left( \psi ^{-1}\right) _{n}$. We denote $f=\left(
\psi ^{-1}\right) _{n}\left( z^{\prime },z_{n}\right) $. Since $\overline{%
\partial }_{b}f=0$ on $L$, where $\overline{\partial }_{b}$ is the
tangential Cauchy-Riemann operator on $L$, there exists an extension $%
\widetilde{f}$ of class $C^{3}$ in a neighborhood of $z_{0}$ such that $%
\overline{\partial }\widetilde{f}$ vanishes to order greater than $2$ on $L$%
, i.e. $D^{l}\overline{\partial }\widetilde{f}=0$ for $\left\vert
l\right\vert \leqslant 2$ on $L$.

So there exists an open finite covering $\left( \widetilde{U_{j}}\right)
_{j\in J}$ by holomorphic charts of $L$ such that $\widetilde{U_{j}}%
\backslash L=\widetilde{U_{j}^{+}}\cup \widetilde{U_{j}^{-}}$ such that $%
U_{j}=L\cap \widetilde{U_{j}}=\left\{ z\in \widetilde{U_{j}}:\func{Im}%
\widetilde{f_{j}}=0\right\} $, where $\overline{\partial }\widetilde{f_{j}}$
vanishes to order greater than $2$ on $L$ and the Levi foliation is given on 
$U_{j}$ by $\left\{ z\in U_{j}:\ \widetilde{f_{j}}\left( z\right)
=c_{j}\right\} $, $c_{j}\in \mathbb{R}$ . Thus $d\widetilde{f_{j}}=\partial 
\widetilde{f_{j}}$ is a nonvanishing section of $\mathcal{N}_{L}^{1,0}$ on $%
U_{j}$ and by shrinking $\widetilde{U_{j}}$, we may consider that $d%
\widetilde{f_{j}}\neq 0$ on $\widetilde{U_{j}}$.

We may suppose that $\mathcal{N}_{L}^{1,0}$ is represented by a cocycle $%
\left\{ g_{jk}\right\} $ of class $C^{2}$ subordinated to the covering $%
\left( U_{j}\right) _{j\in J}$ and there exist closed $\left( 1,0\right) $%
-forms $\alpha _{j}$ of class $C^{2}$ on $U_{j}$ holomorphic along the
leaves such that $T^{1,0}\left( U_{j}\right) =\ker \alpha _{j}$ for every $%
j\in J$ and $\alpha _{j}=g_{jk}\alpha _{k}$ on $U_{j}\cap U_{k}$. So $\left(
\alpha _{j}\right) _{j\in J}$ defines a global form $\alpha $ on $L$ with
values in $\mathcal{N}_{L}^{1,0}$ such that locally on $U_{j}$ we have $%
\alpha \left( z\right) =\alpha _{j}\left( z\right) \otimes \alpha _{j}^{\ast
}\left( z\right) $ where $\alpha _{j}^{\ast }$ is the dual frame of $\alpha
_{j}$. In particular we have $\alpha _{k}^{\ast }=g_{jk}\alpha _{j}^{\ast }$.

Let $h$ be a $C^{2}$ Hermitian metric with positive leafwise curvature $%
\Theta _{h}\left( \mathcal{N}_{L}^{1,0}\right) $ on $\mathcal{N}_{L}^{1,0}$. 
$h$ is defined on each $U_{j}$ by a $C^{2}$ function $h_{j}=$ $\left\vert
\alpha _{j}^{\ast }\right\vert ^{2}$ such that $h_{k}=\left\vert
g_{jk}\right\vert ^{2}h_{j}$ on $U_{j}\cap U_{k}$.

Since $\alpha _{j}=\eta _{j}d\widetilde{f_{j}}$ on $U_{j}$ for every $j$,
where $\eta _{j}$ are nowhere vanishing functions of class $C^{2}$ on $U_{j}$
holomorphic along the leaves and 
\begin{equation*}
\frac{1}{\eta _{k}}\left( d\widetilde{f_{k}}\right) ^{\ast }=\frac{1}{\eta
_{j}}g_{jk}\left( d\widetilde{f_{j}}\right) ^{\ast }
\end{equation*}%
on $U_{j}\cap U_{k}$, it follows that 
\begin{equation*}
\left\vert g_{jk}\left( z\right) \right\vert ^{2}=\left\vert \frac{\eta
_{j}\left( z\right) }{\eta _{k}\left( z\right) }\right\vert ^{2}\left\vert 
\frac{\left( d\widetilde{f_{k}}\right) ^{\ast }}{\left( d\widetilde{f_{j}}%
\right) ^{\ast }}\right\vert ^{2}=\frac{h_{k}\left( z\right) }{h_{j}\left(
z\right) },\ z\in U_{j}\cap U_{k}.
\end{equation*}

So 
\begin{equation*}
h_{j}\left\vert \eta _{j}\right\vert ^{2}\left( \func{Im}\widetilde{f_{j}}%
\right) ^{2}-h_{k}\left\vert \eta _{k}\right\vert ^{2}\left( \func{Im}%
\widetilde{f_{k}}\right) ^{2}
\end{equation*}%
vanishes to order greater than $2$ on $U_{j}\cap U_{k}$ and $\left(
h_{j}\left\vert \eta _{j}\right\vert ^{2}\left( \func{Im}\widetilde{f_{j}}%
\right) ^{2}\right) _{j\in J}$ defines a jet of order $2$ on $L$. By Whitney
extension theorem there exists a $C^{2}$ function $v$ on $X$ such that $%
v-h_{j}\left\vert \eta _{j}\right\vert ^{2}\left( \func{Im}\widetilde{f_{j}}%
\right) ^{2}$ vanishes to order $2$ on $U_{j}$ for every $j\in J$. Let $%
\widetilde{\eta _{j}},\widetilde{h_{j}}$ be $C^{2}$ extensions of $\eta
_{j},h_{j}$ on $\widetilde{U_{j}}$ and set $\widetilde{\alpha _{j}}=%
\widetilde{\eta _{j}}d\widetilde{f_{j}}$, $\widetilde{v}=\widetilde{h_{j}}%
\left\vert \widetilde{\eta _{j}}\right\vert ^{2}\left( \func{Im}\widetilde{%
f_{j}}\right) ^{2}$.

For $z\in \widetilde{U_{j}}$ denote $E_{z}^{\prime }=\left\{ V^{\prime }\in
T_{z}^{1,0}\left( X\right) :\ \left\langle \partial \func{Im}\widetilde{f_{j}%
},V^{\prime }\right\rangle =0\right\} $ and $E_{z}^{^{\prime \prime }}$ the
orthogonal of $E_{z}^{\prime }$ in $T_{z}^{1,0}\left( X\right) $. Then for
every $V\in T_{z}^{1,0}\left( X\right) $ there exists $V^{\prime }\in
E_{z}^{\prime },V^{\prime \prime }\in E_{z}^{\prime \prime }$ such that $%
V=V^{\prime }+V^{\prime \prime }$. The curvature form $\Theta \left( 
\mathcal{N}_{L}^{1,0}\right) $ is represented by $-i\partial \overline{%
\partial }\ln \left( h_{j}\left\vert \alpha _{j}\right\vert ^{2}\right) $ on 
$U_{j}$, so by shrinking $\widetilde{U_{j}}$ we may suppose that there
exists $\beta >0$ such that $\left( -i\partial \overline{\partial }\ln
\left( \widetilde{h_{j}}\left\vert \widetilde{\alpha _{j}}\right\vert
^{2}\right) \right) \left( V^{\prime },\overline{V^{\prime }}\right)
\geqslant \beta \omega \left( V^{\prime },\overline{V^{\prime }}\right) $
for every $z\in \widetilde{U_{j}}$ and $V\in T_{z}^{1,0}\left( X\right) $.

On $\widetilde{U_{j}}\backslash L$ we have 
\begin{eqnarray}
-i\partial \overline{\partial }\ln \widetilde{v} &=&-i\partial \overline{%
\partial }\ln \left( \widetilde{h_{j}}\left\vert \frac{\widetilde{\alpha _{j}%
}}{d\widetilde{f_{j}}}\right\vert ^{2}\left( \func{Im}\widetilde{f_{j}}%
\right) ^{2}\right)  \label{A} \\
&=&-i\partial \overline{\partial }\ln \widetilde{h_{j}}\left\vert \widetilde{%
\alpha _{j}}\right\vert ^{2}+i\partial \overline{\partial }\ln \left\vert d%
\widetilde{f_{j}}\right\vert ^{2}-i\partial \overline{\partial }\ln \left( 
\func{Im}\widetilde{f_{j}}\right) ^{2}.  \notag
\end{eqnarray}%
Let $z\in \widetilde{U_{j}}$ and $V\in T_{z}^{1,0}\left( X\right) $. Then $%
V=V^{\prime }+V^{\prime \prime }$, $V^{\prime }\in E_{z}^{\prime }$ and $%
V"\in E_{z}^{\prime \prime }$ and%
\begin{eqnarray*}
-i\partial \overline{\partial }\ln \widetilde{h_{j}}\left\vert \widetilde{%
\alpha _{j}}\right\vert ^{2}\left( V,\overline{V}\right) &=&\left(
-i\partial \overline{\partial }\ln \left( \widetilde{h_{j}}\left\vert 
\widetilde{\alpha _{j}}\right\vert ^{2}\right) \right) \left( V^{\prime },%
\overline{V^{\prime }}\right) \\
&&+2\func{Re}\left( -i\partial \overline{\partial }\ln \left( \widetilde{%
h_{j}}\left\vert \widetilde{\alpha _{j}}\right\vert ^{2}\right) \left(
V^{\prime },\overline{V^{\prime \prime }}\right) \right) \\
&&+\left( -i\partial \overline{\partial }\ln \left( \widetilde{h_{j}}%
\left\vert \widetilde{\alpha _{j}}\right\vert ^{2}\right) \right) \left(
V^{\prime \prime },\overline{V^{\prime \prime }}\right)
\end{eqnarray*}%
There exists a constant $C>0$ depending on the eigenvalues of $-i\partial 
\overline{\partial }\ln \left( \widetilde{h_{j}}\left\vert \widetilde{\alpha
_{j}}\right\vert ^{2}\right) $ with respect to $\omega $ such that for every 
$\varepsilon >0$%
\begin{equation*}
2\left\vert \func{Re}\left( -i\partial \overline{\partial }\ln \left( 
\widetilde{h_{j}}\left\vert \widetilde{\alpha _{j}}\right\vert ^{2}\right)
\left( V^{\prime },\overline{V^{\prime \prime }}\right) \right) \right\vert
\leqslant C\left( \varepsilon \omega \left( V^{\prime },\overline{V^{\prime }%
}\right) +\frac{1}{\varepsilon }\omega \left( V^{\prime \prime },\overline{%
V^{\prime \prime }}\right) \right) ,
\end{equation*}%
so%
\begin{eqnarray}
-i\partial \overline{\partial }\ln \widetilde{h_{j}}\left\vert \widetilde{%
\alpha _{j}}\right\vert ^{2}\left( V,\overline{V}\right) &\geqslant &\beta
\omega \left( V^{\prime },\overline{V^{\prime }}\right) -C\left( \varepsilon
\omega \left( V^{\prime },\overline{V^{\prime }}\right) -\frac{1}{%
\varepsilon }\omega \left( V^{\prime \prime },\overline{V^{\prime \prime }}%
\right) \right)  \notag \\
&&-\left\Vert -i\partial \overline{\partial }\ln \widetilde{h_{j}}\left\vert 
\widetilde{\alpha _{j}}\right\vert ^{2}\right\Vert _{\omega }\omega \left(
V^{\prime \prime },\overline{V^{\prime \prime }}\right)  \label{B}
\end{eqnarray}

Since $\overline{\partial }\widetilde{f_{j}}$ vanishes to order greater than 
$2$ on $L$, for every $\gamma >0$ there exists a neighborhood of $L$ such
that 
\begin{equation}
\left\vert i\partial \overline{\partial }\ln \left\vert d\widetilde{f_{j}}%
\right\vert ^{2}\left( V,\overline{V}\right) \right\vert \leqslant \gamma
\omega \left( V,\overline{V}\right)  \label{C}
\end{equation}%
and 
\begin{equation}
\left\vert i\partial \overline{\partial }\func{Im}\widetilde{f_{j}}\left( V,%
\overline{V}\right) \right\vert \leqslant \gamma \left( \func{Im}\widetilde{%
f_{j}}\right) \omega \left( V,\overline{V}\right) .  \label{D}
\end{equation}%
Let $z\in \widetilde{U_{j}}\backslash L$. By (\ref{D}) it follows that%
\begin{eqnarray}
-i\partial \overline{\partial }\ln \left( \func{Im}\widetilde{f_{j}}\right)
^{2}\left( V,\overline{V}\right) &=&\left( -2\frac{i\partial \overline{%
\partial }\func{Im}\widetilde{f_{j}}}{\func{Im}\widetilde{f_{j}}}+2i\frac{%
\partial \func{Im}\widetilde{f_{j}}\wedge \overline{\partial }\func{Im}%
\widetilde{f_{j}}}{\left( \func{Im}\widetilde{f_{j}}\right) ^{2}}\right)
\left( V,\overline{V}\right)  \notag \\
&\geqslant &-2\gamma \omega \left( V,\overline{V}\right) +2i\frac{\partial 
\func{Im}\widetilde{f_{j}}\wedge \overline{\partial }\func{Im}\widetilde{%
f_{j}}}{\left( \func{Im}\widetilde{f_{j}}\right) ^{2}}\left( V^{\prime
\prime },\overline{V^{\prime \prime }}\right)  \label{E} \\
&\geqslant &-2\gamma \omega \left( V,\overline{V}\right) +\frac{2\underset{%
\widetilde{U_{j}}}{\inf }\left\Vert \partial \func{Im}\widetilde{f_{j}}%
\right\Vert _{\omega }^{2}}{\left( \func{Im}\widetilde{f_{j}}\right) ^{2}}%
\omega \left( V^{\prime \prime },\overline{V^{\prime \prime }}\right) . 
\notag
\end{eqnarray}%
By using (\ref{B}),\ (\ref{C}) and (\ref{E}), from (\ref{A}) we obtain%
\begin{eqnarray*}
\left( -i\partial \overline{\partial }\ln \widetilde{v}\right) \left( V,%
\overline{V}\right) &\geqslant &\left( \beta -C\varepsilon \right) \omega
\left( V^{\prime },\overline{V^{\prime }}\right) \\
&&+\left( \frac{2}{\left( \func{Im}f_{j}\right) ^{2}}\underset{\widetilde{%
U_{j}}}{\inf }\left\Vert \partial \func{Im}\widetilde{f_{j}}\right\Vert
_{\omega }^{2}-\frac{C}{\varepsilon }-\left\Vert -i\partial \overline{%
\partial }\ln \widetilde{h_{j}}\left\vert \widetilde{\alpha _{j}}\right\vert
^{2}\right\Vert _{\omega }\right) \omega \left( V^{\prime \prime },\overline{%
V^{\prime \prime }}\right) \\
&&-2\gamma \omega \left( V,\overline{V}\right) .
\end{eqnarray*}

By choosing $0<C\varepsilon <\beta $ and by shrinking $\widetilde{U_{j}}$
such that $\frac{2}{\left( \func{Im}f_{j}\right) ^{2}}$ is big enough and $%
\gamma $ small enough, we obtain that there exists $c>0$ such that $%
-i\partial \overline{\partial }\ln \widetilde{v}\geqslant c\omega $ on $%
\widetilde{U_{j}}\backslash L$. Finally, since $v-\widetilde{v}$ vanishes to
order greater than $2$ on $L$, it follows that there exists a neighborhood $%
U^{\prime }$ of $L$ such that $-\ln v$ is strongly plurisubharmonic on $%
U^{\prime }\backslash L$. We can now take $U=\left\{ z\in U^{\prime }:\
v\left( z\right) <\mu \right\} $ for $\mu >0$ small enough.

$L$ is a $C^{3}$ manifold, so the signed distance function $\delta_L $ is a
defining function of class $C^{3}$ for $L$. Since $v$ is of class $C^{2}$ on 
$U$ and vanishes to order greater than $2$ on $L$, we have $v=g\delta_L ^{2}$
with $g$ continuous in a neighborhood of $L$.

Suppose that there exists $x\in L$ such that $g\left( x\right) =0$. Then $%
v=o\left( \delta _{L}^{2}\right) $ in a neighborhood of $x$. But there
exists $j$ such that $x\in U_{j}$ and $v=h_{j}\left\vert \eta
_{j}\right\vert ^{2}\left( \func{Im}\widetilde{f_{j}}\right) ^{2}+o\left(
\delta _{L}^{2}\right) $. Since $\func{Im}\widetilde{f_{j}}=0$ and $d\func{Im%
}\widetilde{f_{j}}\neq 0$ on $L$ it follows that $\left\vert \nabla
^{2}v\right\vert \left( x\right) \neq 0$. This contradiction shows that $%
g\left( x\right) \neq 0$ on $L$.
\end{proof}

\section{Weighted estimates for the $\overline{\partial }$-equation}

\begin{remark}
\label{Remark}Under the hypothesis and conclusions of Proposition \ref%
{Strongly pseudoconvex}, we consider a positive extension $\widetilde{v}$ of
the restriction of $v$ on a neighborhood of $L$ to $X\backslash L$. Let $s>0$
such that $\left\{ v<e^{-s}\right\} \subset U$ and let $\varphi $ be a
smooth function on $\mathbb{R}$ such that $\varphi =0$ on $]-\infty ,s]$ and 
$\varphi $ is strictly convex increasing on $]s,\infty \lbrack $. Then $\psi
=\varphi \left( -\ln \widetilde{v}\right) $ is a plurisubharmonic exhaustion
function of $X\backslash L$, which is strongly plurisubharmonic outside a
compact subset of $X\backslash L$.

In the sequel, $L$ will be a compact $C^{\infty }$ Levi flat hypersurface in
a compact K\"{a}hler manifold $X$ of dimension $n\geqslant 2$, verifying the
hypothesis and the conclusions of Proposition \ref{Strongly pseudoconvex}.
We denote $X^{\pm }$ the connected components of $\left\{ z\in X:\
v>0\right\} $ endowed with a complete K\"{a}hler metric $\widetilde{\omega }$
which will be defined later and we set 
\begin{equation*}
\mathcal{D}_{(p,q)}(X^{\pm })=\left\{ f\in C_{\left( p,q\right) }^{\infty
}\left( X^{\pm }\right) :\ supp~f\subset \subset X^{\pm }\right\}
\end{equation*}%
and 
\begin{equation*}
\mathcal{H}_{\left( p,q\right) }\left( X^{\pm },\widetilde{v}^{\alpha },%
\widetilde{\omega }\right) =\ker \overline{\partial }\cap \ker \overline{%
\partial }_{\alpha }^{\ast }\subset L_{\left( p,q\right) }^{2}\left( X^{\pm
},\widetilde{v}^{\alpha },\widetilde{\omega }\right)
\end{equation*}%
where $\overline{\partial }_{\alpha }^{\ast }$ is the Hilbert space adjoint
of the operator $\overline{\partial }:L_{\left( p,q\right) }^{2}\left(
X^{\pm },\widetilde{v}^{\alpha },\widetilde{\omega }\right) \rightarrow
L_{\left( p,q+1\right) }^{2}\left( X^{\pm },\widetilde{v}^{\alpha },%
\widetilde{\omega }\right) $.
\end{remark}

\begin{proposition}
\label{Solution d bar k>>0}For every $\alpha >0$, there exists a complete K%
\"{a}hler metric $\widetilde{\omega }$ on $X\backslash L$, $\omega \leqslant 
\widetilde{\omega }\leqslant \frac{C}{\widetilde{v}^{2}}\omega $, $C>0$,
such that the range $\mathcal{R}_{\left( n,q\right) }^{\alpha }\left( X^{\pm
}\right) $ of the operator $\overline{\partial }_{\alpha }:L_{\left(
n,q-1\right) }^{2}\left( X^{\pm },\widetilde{v}^{\alpha },\widetilde{\omega }%
\right) \rightarrow L_{\left( n,q\right) }^{2}\left( X^{\pm },\widetilde{v}%
^{\alpha },\widetilde{\omega }\right) $ is closed for $1\leqslant q\leqslant
n$.
\end{proposition}

\begin{proof}
The proof is based on methods of \cite{Demailly85} (see also \cite%
{Demaillylivre}).

Denote by $\omega $ the K\"{a}hler metric of $X$. Since $i\partial \overline{%
\partial }\left( -\ln v\right) \geqslant c\omega $ on $U\backslash L$, $c>0$%
, by a method developped in \cite{OS98} it follows that there exist a
neigborhood $V$ of $L$ and $\eta >0$ such that $-v^{\eta }$ is strongly
plurisubharmonic on $V\backslash L$. Then for $0<\beta <\eta $, we have the
Donnelly-Fefferman estimate \cite{Donnelly83} 
\begin{equation}
i\partial \left( -\ln v\right) \wedge \overline{\partial }\left( -\ln
v\right) \leqslant ir\partial \overline{\partial }\left( -\ln v\right) .
\label{Donnelly}
\end{equation}%
on $V\backslash L$, with $0<r=\beta /\eta <1$. This is equivalent to say
that the norm of $\partial \left( -\ln v\right) $ measured in the metric $%
i\partial \overline{\partial }\left( -\ln v\right) $ is smaller than $r$ on $%
V\backslash L$ (see also \cite{Berndtsson00} and \cite{Henkin00}).

Let $\alpha >0$. We consider the trivial line bundle $E$ on $X\backslash L$
endowed with the Hermitian metric $h_{\alpha }=e^{\alpha \ln \widetilde{v}}$%
. Set 
\begin{equation*}
\widetilde{\omega }=i\Theta \left( E\right) +K\omega =i\alpha \partial 
\overline{\partial }\left( -\ln \widetilde{v}\right) +K\omega
\end{equation*}%
with $K$ a positive constant. Since $-\ln \widetilde{v}$ is an exhaustion
function on $X\backslash L$, it follows by (\ref{Donnelly}) that for $K$ big
enough $\widetilde{\omega }$ is a complete K\"{a}hler metric on $X\backslash
L$ such that $\omega \leqslant \widetilde{\omega }\leqslant \frac{C}{%
\widetilde{v}^{2}}\omega $, $C>0$.

Denote $\lambda _{j}$ (respectively $\widetilde{\lambda }_{j}$) the
eigenvalues of $i\Theta \left( E\right) $ with respect to $\omega $
(respectively $\widetilde{\omega }$), $1\leqslant j\leqslant n$, in
increasing order. By Proposition \ref{Strongly pseudoconvex}, there exists $%
c>0$ such that $i\Theta \left( E\right) =i\alpha \partial \overline{\partial 
}\left( -\ln \widetilde{v}\right) \geqslant \alpha c\omega $ on $\left\{
\psi >b\right\} $ for $b$ big enough. So, as in \cite{Demailly85} (1.6) we
have%
\begin{equation}
1\geqslant \widetilde{\lambda }_{j}=\frac{\lambda _{j}}{\lambda _{j}+K}%
\geqslant \frac{\alpha c}{\alpha c+K}>0,\ 1\leqslant j\leqslant n
\label{lambda}
\end{equation}%
on $\left\{ \psi >b\right\} $. By Bochner-Kodaira-Nakano inequality (see for
ex. \cite{Demaillylivre}) we have 
\begin{equation}
N_{\alpha ,\widetilde{\omega },\widetilde{v}}\left( \overline{\partial }%
u\right) ^{2}+N_{\alpha ,\widetilde{\omega },\widetilde{v}}\left( \overline{%
\partial }_{\alpha }^{\ast }u\right) ^{2}\geqslant \int_{X^{\pm
}}\left\langle \left( \left[ i\Theta \left( E\right) ,\Lambda _{\widetilde{%
\omega }}\right] \right) u,u\right\rangle _{\alpha ,\widetilde{\omega },%
\widetilde{v}}dV_{\widetilde{\omega }}  \label{BKN}
\end{equation}%
for every $u\in \mathcal{D}_{\left( n,q\right) }\left( X\backslash L\right) $%
, where $N_{\alpha ,\widetilde{\omega },\widetilde{v}}=\int_{X^{\pm
}}\left\vert u\right\vert _{\widetilde{\omega }}^{2}\widetilde{v}^{\alpha
}dV_{\widetilde{\omega }}$.

Let $\chi $ be a smooth function on $X$ such that $0\leqslant \chi \leqslant
1$, $\chi =0$ on a neighborhood of $\left\{ \psi <b\right\} $ and $\chi =1$
on a neighborhood $\left\{ \psi >b^{\prime }\right\} $ of $L$, $b^{\prime
}>b $. By (\ref{BKN}) and (\ref{lambda}), for every $u\in \mathcal{D}%
_{\left( n,q\right) }\left( X\backslash L\right) $ we have%
\begin{eqnarray*}
N_{\alpha ,\widetilde{\omega },\widetilde{v}}\left( \overline{\partial }%
\left( \chi u\right) \right) ^{2}+N_{\alpha ,\widetilde{\omega },\widetilde{v%
}}\left( \overline{\partial }_{\alpha }^{\ast }\left( \chi u\right) \right)
^{2} &\geqslant &\int_{X^{\pm }}\left\langle \left( \left[ i\Theta \left(
E\right) ,\Lambda _{\widetilde{\omega }}\right] \right) \chi u,\chi
u\right\rangle _{\alpha ,\widetilde{\omega },\widetilde{v}}dV_{\widetilde{%
\omega }} \\
&\geqslant &\int_{\left\{ \psi >b^{\prime }\right\} }\left\langle \left( 
\left[ i\Theta \left( E\right) ,\Lambda _{\widetilde{\omega }}\right]
\right) \chi u,\chi u\right\rangle _{\alpha ,\widetilde{\omega },\widetilde{v%
}}dV_{\widetilde{\omega }} \\
&\geqslant &\int_{\left\{ \psi >b^{\prime }\right\} }\left( \lambda
_{1}+\cdot \cdot \cdot +\lambda _{n}\right) \left\vert \chi u\right\vert _{%
\widetilde{\omega }}^{2}\widetilde{v}^{\alpha }dV_{\widetilde{\omega }} \\
&\geqslant &\frac{\alpha c}{\alpha c+K}\int_{\left\{ \psi >b^{\prime
}\right\} }\left\vert u\right\vert _{\widetilde{\omega }}^{2}\widetilde{v}%
^{\alpha }dV_{\widetilde{\omega }}
\end{eqnarray*}%
so there exists $C,c^{\prime }>0$ such that%
\begin{eqnarray*}
&&2N_{\alpha ,\widetilde{\omega },\widetilde{v}}\left( \overline{\partial }%
u\right) ^{2}+2N_{\alpha ,\widetilde{\omega },\widetilde{v}}\left( \overline{%
\partial }_{\alpha }^{\ast }u\right) ^{2}+C\int_{supp\left( \chi ^{\prime
}\right) }\left\vert u\right\vert _{\widetilde{\omega }}^{2}\widetilde{v}%
^{\alpha }dV_{\widetilde{\omega }} \\
&\geqslant &c^{\prime }\int_{X\backslash L}\left\vert u\right\vert _{%
\widetilde{\omega }}^{2}\widetilde{v}^{\alpha }dV_{\widetilde{\omega }%
}-c^{\prime }\int_{\left\{ \psi <b^{\prime }\right\} }\left\vert
u\right\vert _{\widetilde{\omega }}^{2}\widetilde{v}^{\alpha }dV_{\widetilde{%
\omega }}.
\end{eqnarray*}%
Finally it follows that there exists a compact subset $F=supp\left( \chi
^{\prime }\right) \cup \left\{ \psi \leqslant b^{\prime }\right\} $ of $%
X^{\pm }$ such that for every $u\in \mathcal{D}_{\left( n,q\right) }\left(
X\backslash L\right) $%
\begin{equation}
c^{\prime }N_{\alpha ,\widetilde{\omega },\widetilde{v}}\left( u\right)
^{2}\leqslant 2N_{\alpha ,\widetilde{\omega },\widetilde{v}}\left( \overline{%
\partial }u\right) ^{2}+2N_{\alpha ,\widetilde{\omega },\widetilde{v}}\left( 
\overline{\partial }_{\alpha }^{\ast }u\right) ^{2}+\left( C+c^{\prime
}\right) \int_{F}\left\vert u\right\vert _{\widetilde{\omega }}^{2}%
\widetilde{v}^{\alpha }dV_{\widetilde{\omega }}.  \label{fund}
\end{equation}%
Since $\widetilde{\omega }$ is a complete metric on $X\backslash L$, (\ref%
{fund}) is valid for every $u\in \left( Dom\overline{\partial }\right) \cap
\left( Dom\overline{\partial }_{\alpha }^{\ast }\right) $. The conclusion of
Proposition \ref{Solution d bar k>>0} is now a consequence of Proposition
1.2 of \cite{Ohsawa82}.
\end{proof}

\begin{corollary}
\label{Harmonis space =0}For every $\alpha >0$ and $1\leqslant q\leqslant n$
we have $\mathcal{H}_{\left( n,q\right) }\left( X^{\pm },\widetilde{v}%
^{\alpha },\widetilde{\omega }\right) =\left\{ 0\right\} $.
\end{corollary}

\begin{proof}
As $\left( X^{\pm },\widetilde{\omega }\right) $ is a connected weakly $1$%
-complete K\"{a}hler manifold and the bundle $E$ defined in the proof of
Theorem \ref{Solution d bar k>>0} is a semi-positive line bundle on $X^{\pm
} $ which is positive outside a compact suset of $X^{\pm }$, the Corollary %
\ref{Harmonis space =0} is a consequence of \cite{Takegoshi81}, Corollary of
the Main Theorem (see also \cite{Aronszain57}, \cite{Riemenschneider71} and 
\cite{Ohsawabook}, Corollary 2.10).
\end{proof}

By taking in account Corollary \ref{Harmonis space =0}, a classical
application of Proposition \ref{Solution d bar k>>0} (see for example \cite%
{Folland72}) is the following:

\begin{corollary}
\label{Neumann operator}For every $\alpha >0$ and , $1\leqslant q\leqslant n$
we have:

\begin{enumerate}
\item There exists the $\overline{\partial }$-Neumann operator $\mathcal{N}%
_{\left( n,q\right) }^{\alpha }:L_{\left( n,q\right) }^{2}\left( X^{\pm },%
\widetilde{v}^{\alpha },\widetilde{\omega }\right) \rightarrow L_{\left(
n,q\right) }^{2}\left( X^{\pm },\widetilde{v}^{\alpha },\widetilde{\omega }%
\right) $ such that for every $f\in L_{\left( n,q\right) }^{2}\left( X^{\pm
},\widetilde{v}^{\alpha },\widetilde{\omega }\right) $ we have the
orthogonal decomposition $f=\overline{\partial }\overline{\partial }_{\alpha
}^{\ast }\mathcal{N}_{\left( n,q\right) }^{\alpha }f+\overline{\partial }%
_{\alpha }^{\ast }\overline{\partial }\mathcal{N}_{\left( n,q\right)
}^{\alpha }f$ and $\overline{\partial }\mathcal{N}_{\left( n,q\right)
}^{\alpha }=\mathcal{N}_{\left( n,q+1\right) }^{\alpha }\overline{\partial }$%
, $\overline{\partial }_{\alpha }^{\ast }\mathcal{N}_{\left( n,q\right)
}^{\alpha }=\mathcal{N}_{\left( n,q-1\right) }^{\alpha }\overline{\partial }%
_{\alpha }^{\ast }$.

\item For every $\overline{\partial }$-closed form $f\in L_{\left(
n,q\right) }^{2}\left( X^{\pm },\widetilde{v}^{\alpha },\widetilde{\omega }%
\right) $, $\overline{\partial }\left( \overline{\partial }_{\alpha }^{\ast }%
\mathcal{N}_{\left( n,q\right) }^{\alpha }f\right) =f$.
\end{enumerate}
\end{corollary}

\begin{lemma}
\label{Cond moments H orthog}Let $f\in C_{\left( 0,q\right) }^{\infty
}\left( X\right) $, $1\leqslant q\leqslant n-1$, be a $\overline{\partial }$%
-closed form such that $f$ vanishes to infinite order on $L$. Let $\psi
_{1},\psi _{2}\in Dom\overline{\partial }\subset L_{\left( n,n-q\right)
}^{2}\left( X^{\pm },\widetilde{v}^{\alpha },\widetilde{\omega }\right) $
such that $\overline{\partial }\psi _{1}=\overline{\partial }\psi _{2}$. Then%
\begin{equation*}
\int_{X^{\pm }}f\wedge \left( \psi _{1}-\psi _{2}\right) =0.
\end{equation*}
\end{lemma}

\begin{proof}
By Corollary \ref{Neumann operator}, there exists $h\in L_{\left(
n,n-q-1\right) }^{2}\left( X^{\pm },\widetilde{v}^{\alpha },\widetilde{%
\omega }\right) $ such that $\psi _{1}-\psi _{2}=\overline{\partial }h$.
Since $f$ vanishes to infinite order on $L$ and $\widetilde{\omega }%
\leqslant \frac{C}{\widetilde{v}^{2}}\omega $, it follows that 
\begin{equation*}
\int_{X^{\pm }}f\wedge \left( \psi _{1}-\psi _{2}\right) =\underset{%
\varepsilon \rightarrow 0}{\lim }\int_{\left\{ v>\varepsilon \right\} \cap
X^{\pm }}f\wedge \overline{\partial }h=\underset{\varepsilon \rightarrow 0}{%
\lim }\left( \int_{\left\{ v>\varepsilon \right\} }\overline{\partial }%
f\wedge h+\int_{\left\{ v=\varepsilon \right\} \cap X^{\pm }}f\wedge
h\right) =0.
\end{equation*}
\end{proof}

\begin{proposition}
\label{sol dbar comp}Let $f\in C_{\left( 0,q\right) }^{\infty }\left(
X\right) $, $1\leqslant q\leqslant n-1$, be a $\overline{\partial }$-exact
form such that $f$ vanishes to infinite order on $L$. Then for every $\alpha
>0$, there exists $u\in L_{\left( 0,q-1\right) }^{2}\left( X^{\pm },%
\widetilde{v}^{-\alpha },\widetilde{\omega }\right) $ such that $\overline{%
\partial }u=f$ and $N_{-\alpha ,\widetilde{\omega },\widetilde{v}}\left(
u\right) \leqslant C_{\alpha }N_{-\alpha ,\widetilde{\omega },\widetilde{v}%
}\left( f\right) $, with $C_{\alpha }>0$ independent of $f$.
\end{proposition}

\begin{proof}
Step 1. Definition by duality of $u\in L_{\left( 0,q-1\right) }^{2}\left(
X^{\pm },\widetilde{v}^{-\alpha },\widetilde{\omega }\right) $, $1\leqslant
q\leqslant n-1$.

The proof of this point is inspired from \cite{Henkin00}, Proposition 5.3.
By Proposition \ref{Solution d bar k>>0}, $\mathcal{R}_{\left( n,q\right)
}^{\alpha }\left( X^{\pm }\right) $ is closed for every $\alpha >0$ and by
Corollary \ref{Neumann operator} we can find a bounded operator $T_{\left(
n,q\right) }^{\alpha }=\overline{\partial }_{\alpha }^{\ast }\mathcal{N}%
_{\left( n,q\right) }^{\alpha }:\mathcal{R}_{\left( n,q\right) }^{\alpha
}\left( X^{\pm }\right) \rightarrow L_{\left( n,q-1\right) }^{2}\left(
X^{\pm },\widetilde{v}^{\alpha },\widetilde{\omega }\right) $, such that $%
\overline{\partial }T_{\left( n,q\right) }^{\alpha }\varphi =\varphi $ for
every $\varphi \in \mathcal{R}_{\left( n,q\right) }^{\alpha }\left( X^{\pm
}\right) $, $1\leqslant q\leqslant n$.

Define now the continuous linear form $\Phi _{f}$ on $\mathcal{R}_{\left(
n,n-q+1\right) }^{\alpha }\left( X^{\pm }\right) $, $1\leqslant q\leqslant n$%
, by%
\begin{equation*}
\Phi _{f}\left( \varphi \right) =\int_{X^{\pm }}f\wedge T_{\left(
n,n-q+1\right) }^{\alpha }\varphi ,\ \varphi \in \mathcal{R}_{\left(
n,n-q+1\right) }^{\alpha }\left( X^{\pm }\right) .
\end{equation*}

By the Hahn-Banach theorem, we extend $\Phi _{f}$ as a linear form $%
\widetilde{\Phi _{f}}$ on $L_{\left( n,n-q+1\right) }^{2}\left( X^{\pm },%
\widetilde{v}^{\alpha },\widetilde{\omega }\right) $ such that $\left\Vert 
\widetilde{\Phi _{f}}\right\Vert =\left\Vert \Phi _{f}\right\Vert $. Since $%
\left( L_{\left( n,n-q+1\right) }^{2}\left( X^{\pm },\widetilde{v}^{\alpha },%
\widetilde{\omega }\right) \right) ^{\prime }=L_{\left( 0,q-1\right)
}^{2}\left( X^{\pm },\widetilde{v}^{-\alpha },\widetilde{\omega }\right) $
by the pairing 
\begin{equation*}
\left( \beta _{1},\beta _{2}\right) =\int_{X^{\pm }}\beta _{1}\wedge \beta
_{2},\ \beta _{1}\in L_{\left( 0,q-1\right) }^{2}\left( X^{\pm },\widetilde{v%
}^{-\alpha },\widetilde{\omega }\right) ,\ \beta _{2}\in L_{\left(
n,n-q+1\right) }^{2}\left( X^{\pm },\widetilde{v}^{\alpha },\widetilde{%
\omega }\right) ,
\end{equation*}%
there exists $u\in L_{\left( 0,q-1\right) }^{2}\left( X^{\pm },\widetilde{v}%
^{-\alpha },\widetilde{\omega }\right) $ such that 
\begin{equation*}
\widetilde{\Phi _{f}}\left( \varphi \right) =\int_{X^{\pm }}u\wedge \varphi
\end{equation*}%
for every $\varphi \in \mathcal{R}_{\left( n,n-q+1\right) }^{\alpha }\left(
X^{\pm }\right) $.

Step 2. We prove that $\overline{\partial }\left( -1\right) ^{q}u=f$, $%
1\leqslant q\leqslant n-1$.

Let $\varphi =\overline{\partial }\psi \in C_{\left( n,n-q+1\right)
}^{\infty }\left( X^{\pm }\right) $ with $\psi \in \mathcal{D}_{\left(
n,n-q\right) }\left( X^{\pm }\right) $. Set $g_{\alpha }=\overline{\partial }%
_{\alpha }^{\ast }\mathcal{N}_{\left( n,n-q+1\right) }^{\alpha }\overline{%
\partial }\psi \in L_{\left( n,n-q\right) }^{2}\left( X^{\pm },\widetilde{v}%
^{\alpha },\widetilde{\omega }\right) $. By Corollary \ref{Neumann operator}%
, $\overline{\partial }g_{\alpha }=\varphi $ and by Lemma \ref{Cond moments
H orthog} 
\begin{equation}
\int_{X^{\pm }}f\wedge g_{\alpha }=\int_{X^{\pm }}f\wedge \psi .
\label{alfa}
\end{equation}%
But by step 1 we have%
\begin{equation}
\widetilde{\Phi _{f}}\left( \varphi \right) =\int_{X^{\pm }}u\wedge 
\overline{\partial }\psi =\Phi _{f}\left( \varphi \right) =\int_{X^{\pm
}}f\wedge g_{\alpha }  \label{beta}
\end{equation}

and by (\ref{alfa}) and (\ref{beta}) it follows that%
\begin{equation*}
\int_{X^{\pm }}f\wedge \psi =\int_{X^{\pm }}u\wedge \overline{\partial }\psi
\end{equation*}%
for every $\psi \in \mathcal{D}_{\left( n,n-q\right) }\left( X^{\pm }\right) 
$. Therefore $\overline{\partial }\left( -1\right) ^{q}u=f$ and the
Proposition is proved.
\end{proof}

\begin{remark}
\label{Tilde} Since $\omega \leqslant \widetilde{\omega }\leqslant \frac{C}{%
\widetilde{v}^{2}}\omega $, by Lemma VIII.6.3 of \cite{Demaillylivre} it
follows that:

a) Let $f$ be a smooth $\left( n,q\right) $-form on $X$ such that $f$
vanishes to order $k$ on $L$. Then $f\in L_{\left( n,q\right) }^{2}\left(
X^{\pm },\widetilde{v}^{-k},\widetilde{\omega }\right) $

Indeed%
\begin{equation*}
\int_{X^{\pm }}\left\vert f\right\vert _{\widetilde{\omega }}^{2}\widetilde{v%
}^{-k}dV_{\widetilde{\omega }}\leqslant \int_{X^{\pm }}\left\vert
f\right\vert _{\omega }^{2}\widetilde{v}^{-k}dV_{\omega }<\infty
\end{equation*}

b) Let $f\in L_{\left( n,q\right) }^{2}\left( X^{\pm },\widetilde{v}^{-k},%
\widetilde{\omega }\right) $, $k>2$. Then $f\in L_{\left( n,q\right)
}^{2}\left( X^{\pm },\widetilde{v}^{-k+2},\omega \right) $.

Indeed%
\begin{equation*}
\int_{X^{\pm }}\left\vert f\right\vert _{\omega }^{2}\widetilde{v}%
^{-k+2}dV_{\omega }\leqslant C\int_{X^{\pm }}\left\vert f\right\vert _{%
\widetilde{\omega }}^{2}\widetilde{v}^{-k}dV_{\widetilde{\omega }}<\infty ,\
C>0.
\end{equation*}
\end{remark}

\section{Nonexistence of Levi flat hypersurfaces}

\begin{proposition}
\label{Extension}Let $L$ be a compact $C^{\infty }$ Levi flat hypersurface
in a K\"{a}hler manifold $X$ of dimension $n\geqslant 3$ such that the
normal bundle $\mathcal{N}_{L}^{1,0}$ to the Levi foliation admits a $C^{2}$
Hermitian metric with leafwise positive curvature. Let $u\in C_{\left(
0,q\right) }^{\infty }\left( L\right) $, $1\leqslant q\leqslant n-2$, such
that $\overline{\partial }_{b}u=0$. Then for every $k\in \mathbb{N}^{\ast }$
there exist a $\overline{\partial }$-closed extension $U_{k}\in C_{\left(
0,q\right) }^{k}\left( X\right) $ of $u$.
\end{proposition}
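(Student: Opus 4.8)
The plan is to promote the tangential $\overline{\partial}_b$-closed form $u$ on $L$ to an honest $\overline{\partial}$-closed form on a one-sided neighborhood and then patch the two sides together, using the weighted solvability of Corollary~\ref{Solution fi -k Levi plat} to kill the error term. First I would fix a neighborhood $W$ of $L$ as produced in Corollary~\ref{Solution fi -k Levi plat}, with $W\backslash L=W^{+}\cup W^{-}$, and work with a defining function for $L$, say the signed geodesic distance $\delta$, which is $C^{\infty}$ since $L$ is $C^{\infty}$. Take an arbitrary $C^{\infty}$ extension $\widetilde{u}$ of $u$ to $W$ (e.g.\ Whitney / Seeley extension, or pull back along the normal foliation). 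Because $\overline{\partial}_b u=0$, the $(p,q+1)$-form $\overline{\partial}\widetilde{u}$ vanishes along $L$ in the tangential directions, so one can write $\overline{\partial}\widetilde{u}=\delta\,\beta_1$ for a smooth $(p,q+1)$-form $\beta_1$ on $W$; more carefully, using that $\overline{\partial}(\overline{\partial}\widetilde{u})=0$, I would arrange $\overline{\partial}\widetilde{u}=\overline{\partial}\delta\wedge\gamma$ for a smooth form $\gamma$, or at least absorb one power of $\delta$. The key point is to \emph{improve} the extension: by successively solving the tangential equations along $L$ to higher and higher order in $\delta$ (a formal-power-series / Borel-type argument in the normal variable, exactly as in the construction of a $\overline{\partial}$-closed extension from a CR form on a smooth real hypersurface), I get for any prescribed $N$ an extension $\widetilde{u}_N\in C^{\infty}(W)$ with $u$ on $L$ and
\[
\overline{\partial}\widetilde{u}_N=f_N,\qquad f_N=O(\delta^{N})\text{ together with all derivatives up to order }N,
\]
and $f_N$ is $\overline{\partial}$-closed. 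The obstruction to solving the tangential equations order by order is a $\overline{\partial}_b$-cohomology class in bidegree $(p,q+1)$; since $q+1\leqslant n-1$ and $L$ is Levi flat (its leaves are $(n-1)$-dimensional complex manifolds, and the relevant cohomology in degree $q+1\leqslant n-1$ can be handled leafwise, or the class is automatically exact for the range of degrees considered because it lies in the image of $\overline{\partial}_b$ by the formal integrability coming from $\overline{\partial}\,\overline{\partial}\widetilde{u}=0$), these obstructions vanish and the inductive construction goes through. This formal step is where the hypothesis $q\leqslant n-2$ (so that $q+1\leqslant n-1$) is used.

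Next, with $N$ chosen as a function of $k$ so that $f_N\in C^{\infty}_{(p,q+1)}(W\backslash L)\cap L^{2}_{(p,q+1)}(W\backslash L,\delta_{\partial W}^{-k})$ — this needs $N$ large relative to $k$, so that $\delta^{N}$ beats the weight $\delta_{\partial W}^{-k}$ near $L$; note $\delta_{\partial W}$ and $\delta=\delta_L$ are comparable near $L$ — I apply Corollary~\ref{Solution fi -k Levi plat} to $f_N$ on $W$ (shrinking $W$ and enlarging $k$ if necessary, which is harmless). Here one must check that $f_N$ is genuinely $\overline{\partial}$-closed on $W\backslash L$ and lies in the weighted $L^2$ space: closedness is immediate since $f_N=\overline{\partial}\widetilde{u}_N$, and the $L^2$ bound follows from $f_N=O(\delta^{N})$ with $N>k/2+n$ or so, together with compactness of $\overline{W}$. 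The Corollary yields $w_k\in L^{2}_{(p,q)}(W\backslash L,\delta_{\partial W}^{-k})$ with $\overline{\partial}w_k=f_N$ and $N_{-k}(w_k)\preccurlyeq N_{-k}(f_N)$. Set $U_k=\widetilde{u}_N-w_k$; then $\overline{\partial}U_k=f_N-f_N=0$ on $W\backslash L$, and, since $w_k\in L^2$ with weight $\delta_{\partial W}^{-k}$ and $\overline{\partial}w_k=f_N$ is smooth, elliptic regularity in the interior gives $w_k\in C^{\infty}(W\backslash L)$, hence $U_k$ is $\overline{\partial}$-closed and smooth off $L$.

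It remains to obtain the asserted regularity $U_k\in C^{k}_{(p,q)}(W)$ \emph{up to and across} $L$. For this I would invoke Theorem~\ref{Brinkschulte} (Brinkschulte's regularity theorem), as packaged in Corollary~\ref{Solution d bar delta -alfa lisse}: applying it on $W$ (or on each side and matching) to the equation $\overline{\partial}w_k=f_N$ with the weight $\delta_{\partial W}^{-k}$, and passing to the minimal solution (the one orthogonal to $\ker\overline{\partial}$, i.e.\ with $\overline{\partial}^{\ast}_{-k}w_k=0$), one gets $w_k\in C^{s(k')}(\overline{W})$ with $s(k')\sim\sqrt{k'}$; choosing the starting index $k'$ large enough that $s(k')\geqslant k$ (and then re-naming), $w_k\in C^{k}(\overline{W})$. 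Since $\widetilde{u}_N$ is $C^{\infty}$, $U_k=\widetilde{u}_N-w_k\in C^{k}_{(p,q)}(\overline{W})$, and as it is defined on all of $W$ and $C^k$ there it is in particular a $C^k$ form on the full neighborhood $W$ of $L$; it is $\overline{\partial}$-closed on $W\backslash L$ and, being $C^k$ with $k\geqslant 1$, its $\overline{\partial}$ extends continuously across $L$ and hence vanishes on $W$. This gives the desired $\overline{\partial}$-closed extension $U_k\in C^{k}_{(p,q)}(W)$ of $u$.

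The main obstacle is the first paragraph: making the formal extension across the Levi flat hypersurface to high order in $\delta$ while keeping $\overline{\partial}$ of the extension $\overline{\partial}$-closed and $O(\delta^{N})$. One has to verify that the sequence of obstruction classes in $\overline{\partial}_b$-cohomology of $L$ in bidegree $(p,q+1)$ — which is exactly where $q+1\leqslant n-1$ enters, keeping us below the top degree where Levi-flat CR cohomology can be solved leafwise — indeed vanishes, so that the induction on the order of vanishing can be carried through; the rest is bookkeeping (comparing $\delta_L$ with $\delta_{\partial W}$, choosing $N=N(k)$) plus direct appeals to Corollary~\ref{Solution fi -k Levi plat}, Theorem~\ref{Brinkschulte}, and interior/boundary elliptic regularity.
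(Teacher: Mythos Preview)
Your overall strategy coincides with the paper's: produce an extension $\widetilde{u}$ whose $\overline{\partial}$ vanishes to high order on $L$, solve $\overline{\partial}w=\overline{\partial}\widetilde{u}$ in the weighted spaces of Corollary~\ref{Solution fi -k Levi plat}, upgrade $w$ to $C^{s(k)}(\overline{W})$ via Corollary~\ref{Solution d bar delta -alfa lisse}, and set $U_k=\widetilde{u}-w$. That is exactly what the paper does, only more tersely.

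Two points where you diverge from the paper and make your life harder than necessary.

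\textbf{The formal extension is unobstructed in every degree.} The existence of a $C^\infty$ extension $\widetilde{u}$ with $\overline{\partial}\widetilde{u}$ vanishing to \emph{infinite} order on $L$ is a standard local fact, valid for any bidegree, and requires no $\overline{\partial}_b$-cohomology vanishing. If $\overline{\partial}\widetilde{u}=\rho^m\phi_m$, then $\overline{\partial}^2=0$ forces $\overline{\partial}\rho\wedge\phi_m=O(\rho)$, hence $\phi_m=\overline{\partial}\rho\wedge\psi_m+\rho\chi_m$ pointwise (since $\overline{\partial}\rho\neq 0$); replacing $\widetilde{u}$ by $\widetilde{u}-\rho^{m+1}\psi_m/(m+1)$ gains one order, and a Borel summation finishes. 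No leafwise or global input is needed. The paper simply writes ``Let $\widetilde{u}\in C^\infty_{(p,q)}(X)$ an extension of $u$ such that $\overline{\partial}\widetilde{u}$ vanishes to infinite order on $L$'' and moves on; with $N=\infty$ the weight bookkeeping you describe becomes trivial, since $\overline{\partial}\widetilde{u}\in L^2(W,\delta_{\partial W}^{-k})$ for every $k$.

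\textbf{Where $q\leqslant n-2$ is actually used.} It is not in the formal step but precisely where you apply Corollary~\ref{Solution fi -k Levi plat}: the datum there is the $(p,q+1)$-form $\overline{\partial}\widetilde{u}$, and that corollary (and the underlying Proposition~\ref{Solution a support compact}) needs the form degree to lie in the range $1\leqslant q+1\leqslant n-1$. This is the paper's phrase ``Since $q+1\leqslant n-1$, by Proposition~\ref{Solution a support compact}\ldots''. So your identification of the ``main obstacle'' is misplaced: the formal paragraph is routine, and the genuine analytic content is exactly the weighted solvability plus Brinkschulte regularity that you already invoke correctly.
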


\begin{proof}
By Proposition \ref{Strongly pseudoconvex} there exist a neighborhood $U$ of 
$L$, $c>0$ and a non-negative function $v\in C^{2}\left( \overline{U}\right) 
$ vanishing on $L$ such that $v=g\delta _{L}^{2}$ and $-i\partial \overline{%
\partial }\ln v\geqslant c\omega $ on $U\backslash L$. Let $\widetilde{u}\in
C_{\left( 0,q\right) }^{\infty }\left( X\right) $ be an extension of $u$
such that $\overline{\partial }\widetilde{u}$ vanishes to infinite order on $%
L$. Since $\overline{\partial }\widetilde{u}\in L_{\left( 0,q+1\right)
}^{2}\left( X^{\pm },\delta _{L}^{-2k},\omega \right) $, $q+1\leqslant n-1$
and $L_{\left( 0,q\right) }^{2}\left( X^{\pm },\delta _{L}^{-2k},\omega
\right) =L_{\left( 0,q\right) }^{2}\left( X^{\pm },\widetilde{v}^{-k},\omega
\right) $ for every $k\in \mathbb{N}$, by Remark \ref{Tilde} a) and
Proposition \ref{sol dbar comp} it follows that for every $k\in \mathbb{N}%
^{\ast }$ there exist a Hermitian complete metric $\widetilde{\omega }$ on $%
X\backslash L$, $\omega \leqslant \widetilde{\omega }\leqslant \frac{C}{v^{2}%
}\omega $ and $h^{\pm }\in L_{\left( 0,q\right) }^{2}\left( X^{\pm },\delta
_{L}^{-2k},\widetilde{\omega }\right) $ such that $\overline{\partial }%
h^{\pm }=\overline{\partial }\widetilde{u}$ on $X^{\pm }$. By Remark \ref%
{Tilde} b) we have $h^{\pm }\in L_{\left( 0,q\right) }^{2}\left( X^{\pm
},\delta _{L}^{-2k+4},\omega \right) $. So by using Theorem \ref%
{Brinkschulte}, for $k$ big enough we can choose $h^{\pm }\in C_{\left(
0,q\right) }^{s\left( k\right) }\left( \overline{X^{\pm }}\right) $, $%
s\left( k\right) \underset{k\rightarrow \infty }{\thicksim }\sqrt{k}$. This
means that for $k$ big enough, the form $h$ defined as $h^{\pm }$ on $%
\overline{X^{\pm }}$ is of class $C^{k}$ on $X$ and vanishes on $L$. So $%
U_{k}=\widetilde{u}-h^{\pm }$ is a $C^{k}$-smooth $\overline{\partial }$%
-closed form on $X$ which is an extension of $u$.
\end{proof}

\begin{theorem}
\label{Nonexist}Let $X$ be a compact connected K\"{a}hler manifold of
dimension $n\geqslant 3$ and $L$ a $C^{\infty }$ compact Levi flat
hypersurface. Then the normal bundle to the Levi foliation does not admit
any Hermitian metric of class $C^{2}$ with leafwise positive curvature.
\end{theorem}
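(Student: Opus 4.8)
The plan is to argue by contradiction: suppose $\mathcal{N}_L^{1,0}$ admits a $C^{2}$ Hermitian metric with leafwise positive curvature. Passing to a double cover as in \S3 we may assume $L$ is two-sided, so $X\setminus L=X^{+}\sqcup X^{-}$ with $X^{\pm}$ connected and $\partial X^{\pm}=L$. First I would invoke Proposition~\ref{Strongly pseudoconvex}: it yields a neighbourhood $U$ of $L$, a constant $c>0$ and $v\in C^{2}(U)$ with $v=g\delta_{L}^{2}$, $g$ continuous and nowhere zero, and $-i\partial\overline{\partial}\ln v\geqslant c\omega$ on $U\setminus L$. Since $-\ln v$ is then a proper exhaustion of a one-sided collar of $L$ inside $X^{\pm}$ which is strictly plurisubharmonic there, extending it arbitrarily over the interior shows each $X^{\pm}$ is $1$-convex; this is the ``strong pseudoconvexity of $X\setminus L$'' of Brunella, now in the refined quantitative form needed for the weighted estimates of \S4.

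The core of the argument is to use Corollary~\ref{Solution fi -k Levi plat}, Corollary~\ref{Solution d bar delta -alfa lisse} and Proposition~\ref{Extension} to propagate the CR structure of $L$ off $L$. Reasoning exactly as in the proof of Proposition~\ref{Extension} (extend a datum to a form whose $\overline{\partial}$ vanishes to infinite order on $L$, solve that equation on $W^{+}$ and on $W^{-}$ with the weighted $L^{2}$-estimates, and subtract, the one-sided solutions gluing to a $C^{k}$ correction vanishing on $L$), one obtains that every CR function on $L$ extends holomorphically to a full neighbourhood $W$ of $L$; here the relevant degree is $q=1$, so the hypothesis $n\geqslant3$ is used. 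Applying this to the local first integrals $\widetilde{f_{j}}|_{L}$ of the Levi foliation --- which are CR functions with non-vanishing differential --- and patching the holomorphic extensions (by uniqueness of holomorphic continuation from the generic hypersurface $L$, two such extensions agreeing on $L$ must agree near it), I would conclude that the Levi foliation extends to a holomorphic codimension-one foliation $\mathcal{F}$ of $W$ leaving $L$ invariant, with $\mathcal{N}_{\mathcal{F}}|_{L}=\mathcal{N}_{L}^{1,0}$ carrying the given leafwise positive metric.

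At this point the hypotheses of Brunella's theorem \cite{Brunella08} in its relaxed form are met --- a compact Kähler manifold $X$ of dimension $\geqslant3$, a Levi flat hypersurface whose Levi foliation extends to a holomorphic foliation near it --- so one concludes that $\mathcal{N}_{L}^{1,0}$ cannot carry a Hermitian metric with leafwise positive curvature, a contradiction. Alternatively, and more in the spirit of \cite{Henkin00}, one finishes directly: pair the leafwise positive curvature of $\mathcal{N}_{\mathcal{F}}$ with a positive $\overline{\partial}_{b}$-closed current directed by $\mathcal{F}|_{L}$ (such currents exist on the compact foliated manifold $L$, or one integrates $\omega^{n-1}$ over a compact leaf if one exists); this pairing is strictly positive, whereas --- $\mathcal{F}$ being now a genuine holomorphic foliation, so that the local weights of the metric differ by functions pluriharmonic along the leaves --- a Stokes-type argument using the boundary regularity of Corollary~\ref{Solution d bar delta -alfa lisse} forces it to vanish. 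I expect the delicate points to be precisely this endgame: patching the local holomorphic extensions into a global foliation, the regularity bookkeeping between the $C^{k}$ extensions of \S5 and the $C^{\infty}$ hypotheses of the solvability results, and, in the minimal-leaf case, the current-theoretic vanishing --- this being where both the Kähler condition and, through \S4, the assumption $n\geqslant3$ are indispensable.
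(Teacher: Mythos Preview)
Your approach has a genuine gap at its core. The machinery of \S4--5, and in particular the argument of Proposition~\ref{Extension}, applies only to \emph{globally defined} $\overline{\partial}_{b}$-closed forms on the compact hypersurface $L$: one first extends the datum to a form on $X$ whose $\overline{\partial}$ vanishes to infinite order on $L$, and then solves the resulting global equation on each $W^{\pm}$ with the weighted estimates. The local first integrals $\widetilde{f_{j}}|_{L}$ are CR functions defined only on the patches $U_{j}$; multiplying by a cutoff to globalise them destroys the infinite-order vanishing of $\overline{\partial}$ along $L$, so the argument does not apply to them. More fundamentally, CR functions on a $C^{\infty}$ Levi-flat hypersurface need not extend holomorphically to a two-sided neighbourhood at all: in the local model $\{\operatorname{Im}z_{n}=0\}$ a CR function is holomorphic in $z'$ but merely $C^{\infty}$ in $\operatorname{Re}z_{n}$, and it extends holomorphically in $z_{n}$ only when it happens to be real-analytic in that variable. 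Producing a holomorphic extension of the Levi foliation is exactly the extra hypothesis Brunella needed in \cite{Brunella08}, and removing it is the whole content of the conjecture; your argument would, if it worked, reprove that hypothesis rather than bypass it. The same gap undercuts your alternative current-theoretic ending, which also presupposes the holomorphic extension $\mathcal{F}$.

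The paper proceeds quite differently and never extends the foliation. Since the complex line bundle $\mathcal{N}_{L}^{1,0}$ is topologically trivial, its curvature form $\Theta^{\mathcal{N}}$ is $d$-exact on $L$, say $\Theta^{\mathcal{N}}=du$ with $u$ real; then $u^{0,1}$ is a \emph{global} $\overline{\partial}_{b}$-closed $(0,1)$-form on $L$, and Proposition~\ref{Extension} (with $q=1$, which is where $n\geqslant 3$ genuinely enters via the constraint $q\leqslant n-2$) yields a $\overline{\partial}$-closed extension $U^{0,1}$ to a neighbourhood of $L$. Strong pseudoconvexity of $X\setminus L$ together with Grauert--Riemenschneider vanishing gives $H_{c}^{2}(X\setminus L,\mathcal{O})=0$, so $U^{0,1}$ extends further to a $\overline{\partial}$-closed $(0,1)$-form $\widetilde{U^{0,1}}$ on all of $X$. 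Hodge symmetry on the compact K\"ahler manifold then writes $\widetilde{U^{0,1}}=\overline{\eta}+\overline{\partial}\Phi$ with $\eta$ a holomorphic $1$-form, and restricting to $L$ one finds $\Theta^{\mathcal{N}}=i\partial_{b}\overline{\partial}_{b}\operatorname{Im}\Phi$, contradicting leafwise positivity at a point where $\operatorname{Im}\Phi$ attains its maximum on $L$.
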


\begin{proof}
Suppose that the normal bundle $\mathcal{N}$ to the Levi foliation admits a
Hermitian metric of class $C^{2}$ with leafwise positive curvature. Since $%
\mathcal{N}$ is topologically trivial, its curvature form $\Theta ^{\mathcal{%
N}}$ for the K\"{a}hler metric of $X$ is $d$-exact. So there exists a $1$%
-form $u$ of class $C^{\infty }$ on $L$ such that $du=\Theta ^{\mathcal{N}}$%
; we may suppose that $u$ is real and $u=u^{0,1}+\overline{u^{0,1}}$, where $%
u^{0,1}$ is the $\left( 0,1\right) $ component of $u$. Since $\Theta ^{%
\mathcal{N}}$ is a $\left( 1,1\right) $-form, it follows that $\overline{%
\partial }_{b}u^{0,1}=0$, where $\overline{\partial }_{b}$ is the tangential
Cauchy-Riemann operator. By Proposition \ref{Extension} there exists a $%
C^{k} $-extension $U^{0,1}$ of $u^{0,1}$ to $X$, $k\geqslant 2$, such that $%
\overline{\partial }U^{0,1}=0$.

By Hodge symmetry and Dolbeault isomorphism $H^{0,1}\left( X,\mathbb{C}%
\right) \thickapprox \overline{H^{1,0}\left( X,\mathbb{C}\right) }%
\thickapprox \overline{H^{0}\left( X,\Omega _{X}^{1}\right) }$, where $%
\Omega _{X}^{1}$ is the sheaf of holomorphic $1$-forms on $X$. So there
exists $\eta \in H^{0}\left( X,\Omega _{X}^{1}\right) $ and $\Phi \in
C^{k}\left( X\right) $ such that $\widetilde{U^{0,1}}=\overline{\eta }+%
\overline{\partial }\Phi $. It follows that $\Theta ^{\mathcal{N}}=i\partial
_{b}\overline{\partial _{b}}\func{Im}\Phi $ on $L$ and this gives a
contradiction at the point of $L$ where $\func{Im}\Phi $ reaches its maximum.
\end{proof}

\begin{remark}
A first version of this paper was announced on arXiv in 2014, but there was
a gap in the proofs of \S 4, which is now corrected. Recently, Brinkschulte
proved a generalization of Theorem \ref{Nonexist} for compact Levi flat
hypersurfaces in complex manifolds (see Theorem 1.1. of \cite%
{Brinkschulte2018}). She uses crucially the Proposition 4.1\ of \cite%
{Brinkschulte2018}, whose statement and proof are the same as Proposition %
\ref{Strongly pseudoconvex} of this paper and which are unchanged from 2014
in our preprint arXiv:1406.5712. However she refers only to Proposition 1.1
of \cite{Ohsawa2013}, where the lower positive bound for the eigenvalues of
the strongly plurisubharmonic function is not mentioned.
\end{remark}

\begin{acknowledgement}
We would like to thank M. Adachi and T.-C. Dinh for very useful discussions.
We would also thank the referees for their remarks.
\end{acknowledgement}

\renewcommand\baselinestretch{1} 


\begin{thebibliography}{}

\bibitem{AV65}
A.Andreotti and E.~Vesentini, \emph{Carleman estimates for the{
  L}aplace-{B}eltrami equation on complex manifolds}, Publ. Math. IHES
  \textbf{24-25} (1965), 81--150.

\bibitem{Andreotti61}
A.~Andreotti and E.~Vesentini, \emph{Sopra un teorema di {K}odaira}, Ann.
  Scuola Norm. Sup. Pisa \textbf{15} (1961), no.~4, 283--309.

\bibitem{Aronszain57}
N.~Aronszajn, \emph{A unique continuation theorem for solutions of elliptic
  partial differential equations or inequalities of second order}, J. Math.
  Pures Appl. \textbf{36} (1957), 235--249.

\bibitem{Barrett88}
D.~E. Barrett and J.~E. Fornaess, \emph{On the smoothness of
  {L}evi-foliations}, Publ. Mat. \textbf{2} (1988), 171--177.

\bibitem{Bendixon1901}
I.~Bendixson, \emph{Sur les courbes d{\'e}finies par une {\'e}quation
  diff{\'e}rentielle}, Acta Matematica \textbf{24} (1901), 1--88.

\bibitem{Berndtsson00}
B.~Berndtsson and Ph. Charpentier, \emph{A {S}obolev mapping property of the
  {B}ergman kernel}, Math. Z. \textbf{235} (2000), 1--10.

\bibitem{Brinkschulte2004}
J.~Brinkschulte, \emph{The $\overline{\partial }$-problem with support
  conditions on some weakly pseudoconvex domains}, Ark. för Mat. \textbf{42}
  (2004), 259--282.

\bibitem{Brinkschulte2018}
\bysame, \emph{On the normal bundle of {L}evi-flat real hypersurfaces}, Math.
  Ann. (2018), to appear.

\bibitem{Brunella08}
M.~Brunella, \emph{On the dynamics of codimension one holomorphic foliations
  with ample normal bundle}, Indiana Univ. Math. J. \textbf{57} (2008),
  3101--3113.

\bibitem{Brunella2010}
\bysame, \emph{Codimension one foliations on complex tori}, Ann. Fac. Sci.
  Toulouse Math. \textbf{19} (2010), 405--418.

\bibitem{Brunella2011}
M.~Brunella and C.~Perrone, \emph{Exceptional singularities of codimension one
  holomorphic foliations}, Publ. Mat. \textbf{55} (2011), 295--312.

\bibitem{Camacho88}
C.~Camacho{, A. Lins Neto} and P.~Sad, \emph{Minimal sets of foliations in
  complex projective space}, Publ. Math. de I.H.E.S. \textbf{68} (1988),
  187--203.

\bibitem{Cerveau93}
D.~Cerveau, \emph{Minimaux des feuilletages alg\'ebriques de $\mathbb{CP}^n$},
  Ann. Inst. Fourier \textbf{43} (1993), 1535--1543.

\bibitem{Demaillylivre}
J.-P. Demailly, \emph{Complex {A}nalytic {G}eometry and {D}ifferential
  {G}eometry}, http://www-fourier.ujf-grenoble.fr$\backsim
  $demailly/books.html.

\bibitem{DE82}
\bysame, \emph{Estimations ${L}^{2}$ pour l'op\'erateur $\overline{\partial }$
  d'un fibr\'e vectoriel holomorphe semi-positif au-dessus d'une vari\'et\'e
  k\"ahl\'erienne compl\`ete}, Ann. Scient. Ec. Norm. Sup. \textbf{15} (1982),
  457--511.

\bibitem{Demailly85}
\bysame, \emph{{Sur les th\'eor\`emes d'annulation et de finitude de T. Ohsawa et
  O. Abdelkader}}, {S\'eminaire P. Lelong - P. Dolbeault - H. Skoda (Analyse)
  1985/86}, Lecture Notes in Mathematics, no. 1295, Springer-Verlag, 1987,
  pp.~48--58.

\bibitem{Donnelly83}
H.~Donnelly and C.~Fefferman, \emph{${L}\sp{2}$-cohomology and index theorem
  for the {B}ergman metric}, Ann. of Math. \textbf{118} (1983), no.~2,
  593--618.

\bibitem{Folland72}
G.~B. Folland and J.~J. Kohn, \emph{The {N}eumann problem for the
  {C}auchy-{R}iemann complex}, no.~75, Princeton Univ. Press, Princeton, N. J.,
  1972.

\bibitem{Henkin00}
G.~M. Henkin and A.~Iordan, \emph{Regularity of $\overline{\partial }$ on
  pseudoconcave compacts and applications}, Asian J. Math. \textbf{4} (2000),
  no.~4, 855--884, and Erratum to : Regularity of $\overline{\partial }$ on
  pseudoconcave compacts and applications by G.\ M.\ Henkin and A. Iordan,
  Asian J. Math., 4, 855-884, 2000.

\bibitem{HO65}
L.~H{\"o}rmander, \emph{${L}^{2}$ estimates and existence theorems for the
  $\overline{\partial }$ operator}, Acta Math. \textbf{113} (1965), 89--152.

\bibitem{Iordan08}
A~Iordan and F.~Matthey, \emph{R\'egularit\'e de l'op\'erateur
  $\overline{\partial }$ et th\'eor\`eme de {S}iu sur la nonexistence
  d'hypersurfaces {L}evi-plates dans l'espace projectif complexe
  $\mathbb{CP}^n$, n$\geqslant 3$}, C. R. Acad. Sc. Paris \textbf{346} (2008),
  395--400.

\bibitem{LinsNeto}
A.~Lins Neto, \emph{A note on projective {L}evi flats and minimal sets of
  algebraic foliations}, Ann. Inst. Fourier \textbf{49} (1999), 1369--1385.

\bibitem{Ohsawa82}
T.~Ohsawa, \emph{Isomorphism theorems for cohomology groups of weakly
  1-complete manifolds}, Publ. RIMS, Kyoto Univ. \textbf{18} (1982), 191--232.

\bibitem{Ohsawa07}
\bysame, \emph{On the complement of {L}evi flats in {K\"ahler} manifolds of
  dimension $\geqslant 3$}, Nagoya Math. J. \textbf{185} (2007), 161--169.

\bibitem{Ohsawa2013}
\bysame, \emph{Nonexistence of certain {L}evi flat hypersurfaces in {K}\"ahler
  manifolds from the viewpoint of positive normal bundles}, Publ. RIMS Kyoto
  Univ. \textbf{49} (2013), 229--239.

\bibitem{Ohsawabook}
\bysame, \emph{${L}^{2}$ approaches in {S}everal {C}omplex {V}ariables},
  Springer, 2015.

\bibitem{OS98}
T.~Ohsawa and N.~Sibony, \emph{Bounded { P.S.H.} functions and pseudoconvexity
  in a { K}\"ahler manifold}, Nagoya Math. J. \textbf{149} (1998), 1--8.

\bibitem{Poincare1881}
H.~Poincar{\'e}, \emph{M{\'e}moire sur les courbes d{\'e}finies par une {\'e}quation
  diff{\'e}rentielle}, Journal de Math. Pures et Appl. \textbf{7} (1881),
  375--422.

\bibitem{Poincare1882}
\bysame, \emph{M{\'e}moire sur les courbes d{\'e}finies par une {\'e}quation
  diff{\'e}rentielle}, Journal de Math. Pures et Appl. \textbf{8} (1882),
  251--296.

\bibitem{Riemenschneider71}
O.~Riemenschneider, \emph{Characterizing {M}oi\v{s}ezon spaces by almost
  positive coherent analytic sheaves}, Math. Z. \textbf{123} (1971), 263--284.

\bibitem{Siu00}
Y.-T. Siu, \emph{Nonexistence of smooth {L}evi-flat hypersurfaces in complex
  projective spaces of dimension $\geqslant 3$}, Ann. of Math. \textbf{151}
  (2000), 1217--1243.

\bibitem{Siu02}
\bysame, \emph{$\overline{\partial }$-regularity for weakly pseudoconvex
  domains in compact {H}ermitian symmetric spaces with respect to invariant
  metrics}, Ann. of Math. \textbf{156} (2002), 595--621.

\bibitem{Takegoshi81}
K.~Takegoshi, \emph{A generalization of vanishing theorems for weakly
  1-complete manifolds}, Publ. RIMS Kyoto Univ \textbf{17} (1981), 311--330.

\bibitem{Takeushi67}
A.~Takeuchi, \emph{Domaines pseudoconvexes sur les vari\'et\'es
  k\"ahl\'eriennes}, J. Math. Kyoto Univ. \textbf{6} (1967), 323--357.
\end{thebibliography}
\end{document}